\addspace\texttt{\mkbibbrackets{\thefield{arxivclass}}}}}}
\addspace\texttt{\mkbibbrackets{\thefield{arxivclass}}}}}}
\newtheorem{theorem}{Theorem}[section]
\newtheorem{lemma}{Lemma}[section]
\newtheorem{proposition}{Proposition}[section]
\newtheorem{corollary}{Corollary}[section]
\theoremstyle{remark}
\newtheorem{remark}{Remark}[section]
\def\O{\Omega}
\def\S{\Sigma}
\def\n{\nabla}
\def\p{\partial}
\def\n{\nabla}
\def\O{\Omega}
\def\p{\partial}
\def\De{\Delta}
\def\n{\nabla}
\def\<{\langle}
\def\>{\rangle}
\def\De{\Delta}
\def\n{\nabla}
\def\SS{\mathbb{S}}
\def\O{\Omega}
\def\p{\partial}
\def\rr{\mathbb{R}}
\begin{document}
	
	\title[A Heintze-Karcher type inequality]{A Heintze-Karcher type inequality for hypersurfaces with capillary boundary}
	\author{Xiaohan Jia}
	\address{School of Mathematical Sciences\\
		Xiamen University\\
		361005, Xiamen, P.R. China}
	\email{jiaxiaohan@xmu.edu.cn}
	\author{Chao Xia}
	\address{School of Mathematical Sciences\\
		Xiamen University\\
		361005, Xiamen, P.R. China}
	\email{chaoxia@xmu.edu.cn}
	\author{Xuwen Zhang}
	\address{School of Mathematical Sciences\\
		Xiamen University\\
		361005, Xiamen, P.R. China}
	\email{xuwenzhang@stu.xmu.edu.cn}
	\thanks{This work is  supported by the NSFC (Grant No. 11871406, 12126102).
	}
	
	\begin{abstract}
		In this paper, we establish a Heintze-Karcher type inequality for hypersurfaces with capillary boundary of contact angle $\theta\in (0,\frac{\pi}{2})$ in a half space or a half ball, by using solution to a mixed boundary value problem in Reilly type formula. Consequently, we give a new proof of Alexandrov type theorem for embedded capillary constant mean curvature hypersurfaces with contact angle $\theta\in (0,\frac{\pi}{2})$ in a half space or a half ball. 		
		\
		
		\noindent {\bf MSC 2020:} 53C24, 35J25, 53C21.
\\
		{\bf Keywords:}   Capillary hypersurface, Mixed boundary value problem, Reilly's Formula, Heintze-Karcher inequality, Alexandrov's theorem. \\
		
	\end{abstract}
	
	\maketitle
	
	\medskip
	

	\section{Introduction}
	Constant mean curvature (CMC) hypersurfaces play as stationary points of the isoperimetric problem.
	A celebrated theorem in differential geometry, which is known as Alexandrov's theorem, says that  any embedded closed hypersurface in $\rr^{n+1}$ with constant mean curvature is a geodesic sphere. It was first proved by Alexandrov \cite{Aleksandrov62} by nowadays famous Alexandrov reflection method or moving plane method. 
	
	Reilly \cite{Reilly77} found an integral formula, which is known as Reilly's formula, and gave an integral proof of Alexandrov's theorem. Later, inspired by Reilly's proof and a volume estimate by Heintze-Karcher \cite{HK78}, Ros \cite{Ros87} established a geometric inequality, known today as the Heintze-Karcher inequality, which says that for a bounded domain $\O$ in $\rr^{n+1}$ with smooth and mean convex boundary $\S=\p\O$, it holds that 
	\begin{eqnarray}\label{hk}
		\int_\S\frac{1}{H}\ge  \frac{n+1}{n}|\O|,
	\end{eqnarray}
	with equality holding if and only if $\S$ is a geodesic sphere. Here $H$ is the mean curvature of $\S$ and $\S$ is said to be mean convex if $H>0$. 
	A combination of the Heintze-Karcher inequality \eqref{hk} and the Minkowski-Hsiung formula yields Alexandrov's theorem.
	
	Motivated by the study of the equilibrium shapes of a liquid confined in a given container or a pendent drop, a corresponding relative isoperimetric problem has been intensively studied, see for example \cite[Chapter 19]{Mag12}. In particular, capillary hypersurfaces play as stationary points of the relative isoperimetric problem. A capillary hypersurface in a container is a hypersurface intersecting the boundary of the container at a constant contact angle. Alexandrov type theorems are established in the spirit of the Alexandrov reflection method when the container possesses certain symmetry. Wente \cite{Wente80} showed that any embedded capillary hypersurface in the half space $$\rr^{n+1}_+=\{x\in \rr^{n+1}:  x_{n+1}>0\}$$ with boundary on the hyperplane $\p \rr^{n+1}_+$, whose mean curvature function is axially symmetric, must be axially symmetric. Wente used a maximum principle with corner due to Serrin \cite{Serrin71} to run the Alexandrov reflection method in such situation. Similarly, Athanassenas \cite{Ath87} and Ros-Souam \cite{RS97} obtained Alexandrov type theorems in a slab or a ball.
	
	It is interesting to see whether one can establish Alexandrov type theorems for capillary hypersurfaces via integral method in the spirit of Reilly \cite{Reilly77} and Ros \cite{Ros87}. 
	In a very recent work by the second author and Wang \cite{WX19}, The following Heintze-Karcher type inequality has been derived. For an embedded mean convex free boundary hypersurface $\S$ lying in the half ball $$\mathbb{B}_+^{n+1}=\{x\in \rr^{n+1}: |x|<1, \quad x_{n+1}>0\},$$ it holds that 
	\begin{eqnarray}\label{hk1}
		\int_\S\frac{x_{n+1}}{H}dA\ge\frac{n+1}{n}\int_\O x_{n+1}dx,
	\end{eqnarray}
	with equality holding if and only if $\S$ is a spherical cap with free boundary. Here $\O$ is the enclosed domain by $\S$ and $\p \mathbb{B}_+^{n+1}$ and a free boundary hypersurface is a hypersurface that intersects $\SS^n$ orthogonally. The proof is based on a mixed boundary value problem in $\O$ and a Reilly type formula due to the second author and Qiu \cite{QX15}. By combining \eqref{hk1} and a new Minkowski-Hsiung's formula \cite{WX19}, they reproved Alexandrov type theorem for free boundary CMC hypersurface in a half ball. We would like to mention that Heintze-Karcher type inequality for free boundary hypersurfaces in a cone and in a wedge have been derived by Choe-Park \cite{CP11} and Lopez \cite{Lopez14} respectively.
	
	The purpose of this paper is to establish a Heintze-Karcher type inequality for capillary hypersurfaces in a half space or a ball with contact angle $\theta\in (0, \frac{\pi}{2})$, in order to give integral proof of corresponding Alexandrov type theorem. In the following, we call $\S$ is a $\theta$-capillary hypersurface in some container $B$ for $\theta\in (0,\pi)$ if  it intersects $\p B$ at contact angle $\theta$.
	
	\begin{theorem}[Heintze-Karcher type inequality in a half space]\label{Thm-HK-halfspace}
		Let  $\theta\in(0,\frac{\pi}{2}]$ and  $\S\subset \bar \rr^{n+1}_+$ be a smooth, compact, embedded, mean convex $\theta$-capillary hypersurface. Let $\O$ be the enclosed domain by $\S$ and $\p \rr^{n+1}_+$. Then it holds, 
		\begin{align}\label{EQ-HK-halfspace}
			\int_\S\frac{1}{H} dA\ge \frac{n+1}{n} |\O|+\cos\theta\frac{\left(\int_\S \<\nu, E_{n+1}\>dA\right)^2}{\int_\S H\<\nu, E_{n+1}\> dA},
		\end{align}
		where $\nu$ is the outward unit normal to $\S$ (with respect to $\O$) and $E_{n+1}=(0,\cdots, 0, 1)$.
		Equivalently, 
		\begin{align}\label{EQ-HK-halfspace2}
			\int_\S\frac{1}{H} dA\ge \frac{n+1}{n} |\O| +\cot\theta\frac{|\p\O\cap \p\rr^{n+1}_+|^2}{|\p\S|}.
		\end{align}				
		Equality in \eqref{EQ-HK-halfspace} or \eqref{EQ-HK-halfspace2} holds if and only if $\S$ is a $\theta$-capillary spherical cap.
	\end{theorem}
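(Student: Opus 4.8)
The plan is to run Ros's Reilly-formula argument on the domain $\O$ enclosed by $\S$ and $\p\rr^{n+1}_+$, coupled to a \emph{mixed} boundary value problem whose Neumann datum on the wetted wall carries a free parameter that is optimized at the end. Write $T:=\overline{\p\O\setminus\S}\subset\p\rr^{n+1}_+$ for that wall piece, $\Gamma:=\p\S=\p T$ for the contact edge, and along $\Gamma$ let $\mu$ be the outward conormal of $\Gamma$ in $\S$, $\bar N=-E_{n+1}$ the outward normal of $\O$ along $T$, and $\bar\mu$ the outward conormal of $\Gamma$ in $T$. The preliminary step is to record that the $\theta$-capillary condition fixes, along $\Gamma$, the orthonormal rotation $\bar N=-\cos\theta\,\nu+\sin\theta\,\mu$ and $\bar\mu=\sin\theta\,\nu+\cos\theta\,\mu$; in particular $\<\nu,\bar N\>=-\cos\theta$ and $\<\nu,\bar\mu\>=\sin\theta$.

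Next, for a parameter $\lambda\in\rr$, I would take $f=f_\lambda$ solving
\[
\Delta f=1\ \text{in }\O,\qquad f=0\ \text{on }\S,\qquad \p_{\bar N}f=\lambda\ \text{on }T .
\]
This is a well-posed Dirichlet--Neumann problem, and it is here that the hypothesis $\theta\le\tfrac\pi2$ enters: since $\O$ has opening angle $\theta$ along $\Gamma$, one needs $\theta\le\tfrac\pi2$ to guarantee enough regularity of $f$ near $\Gamma$ — concretely, $\nabla f$ continuous up to $\Gamma$ and $\nabla^2 f\in L^2(\O)$, which is what makes Reilly's formula and the integrations by parts below legitimate. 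This can be seen by subtracting an explicit polynomial particular solution and invoking the singular-expansion theory of mixed problems on wedge domains, the leading exponent $\tfrac{\pi}{2\theta}$ being $\ge1$ precisely when $\theta\le\tfrac\pi2$ (a reflection across $T$ handles the borderline $\theta=\tfrac\pi2$). Since $f\equiv0$ on $\S$, along $\Gamma$ one has $\nabla f=(\p_\nu f)\,\nu$; equating this with $\lambda\bar N+\nabla^{T}(f|_T)$ and pairing with $\bar N$ gives $\p_\nu f\equiv-\lambda/\cos\theta$ on $\Gamma$, hence $\p_{\bar\mu}(f|_T)=(\p_\nu f)\<\nu,\bar\mu\>\equiv-\lambda\tan\theta$ on $\Gamma$.

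Then I would apply Reilly's formula on $\O$ (the Ricci term vanishes in $\rr^{n+1}$; the boundary of $\O$ is only piecewise smooth, which is exactly what produces the extra edge term). With $f\equiv0$ on $\S$ its $\S$-boundary contribution reduces to $\int_\S H u^2$, where $u:=\p_\nu f$; since the hyperplane is totally geodesic ($H_T=\mathrm{II}_T=0$) and $\p_{\bar N}f\equiv\lambda$ on $T$, the $T$-boundary contribution together with the edge terms coming from integrating by parts along $\S$ and $T$ collapses to $\lambda\oint_\Gamma\p_{\bar\mu}(f|_T)=-\lambda^2\tan\theta\,|\Gamma|$. This yields
\[
\int_\O\big((\Delta f)^2-|\nabla^2 f|^2\big)=\int_\S H u^2-\lambda^2\tan\theta\,|\Gamma| .
\]
Using $|\nabla^2 f|^2\ge\tfrac1{n+1}(\Delta f)^2=\tfrac1{n+1}$ bounds the left-hand side by $\tfrac{n}{n+1}|\O|$, so $\int_\S H u^2\le\tfrac{n}{n+1}|\O|+\lambda^2\tan\theta\,|\Gamma|$; meanwhile $\int_\S u=\int_\O\Delta f-\int_T\p_{\bar N}f=|\O|-\lambda|T|$, and mean convexity permits Cauchy--Schwarz, $\big(\int_\S u\big)^2\le\int_\S\tfrac1H\cdot\int_\S H u^2$. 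Combining these gives, for every $\lambda\in\rr$,
\[
\int_\S\frac1H\ \ge\ \frac{(|\O|-\lambda|T|)^2}{\tfrac{n}{n+1}|\O|+\lambda^2\tan\theta\,|\Gamma|}\ =:\ G(\lambda),
\]
and maximizing the elementary function $G$ (its maximizer $\lambda^\ast=-\tfrac{n|T|}{(n+1)\tan\theta\,|\Gamma|}$ is purely geometric, so one may simply fix $\lambda=\lambda^\ast$ from the outset) gives $G(\lambda^\ast)=\tfrac{n+1}{n}|\O|+\cot\theta\,\tfrac{|T|^2}{|\Gamma|}$, which is \eqref{EQ-HK-halfspace2}. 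The equivalent form \eqref{EQ-HK-halfspace} follows from $\int_\S\<\nu,E_{n+1}\>\,dA=|T|$ (divergence theorem for the field $E_{n+1}$ on $\O$) and $\int_\S H\<\nu,E_{n+1}\>\,dA=\sin\theta\,|\Gamma|$ (integrate $\Delta_\S x_{n+1}=-H\<\nu,E_{n+1}\>$ over $\S$ and use $\<\mu,E_{n+1}\>=-\sin\theta$ on $\Gamma$). For the equality case, equality in the two inequalities forces the Hessian of $f$ to equal $\tfrac1{n+1}$ times the Euclidean metric, so $f(x)=\tfrac{|x-a|^2-R^2}{2(n+1)}$ for some $a\in\rr^{n+1}$, $R>0$; then $f=0$ on $\S$ gives $\S\subset\p B_R(a)$, and the contact angle forces $a_{n+1}=-R\cos\theta$, so $\S$ is a $\theta$-capillary spherical cap. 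The converse is checked directly by running the argument on such a cap.

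I expect the genuine difficulty to be the edge regularity of $f_\lambda$: one must justify that the mixed Dirichlet--Neumann solution is regular enough across the non-smooth contact set $\Gamma$ for Reilly's formula and the face-by-face integrations by parts to apply, and this is precisely what confines the result to $\theta\le\tfrac\pi2$. The rest is the by-now-standard Reilly--Cauchy--Schwarz scheme, upgraded here by the one-parameter family and its optimization.
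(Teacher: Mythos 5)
Your proposal is correct and follows essentially the same route as the paper: the same mixed Dirichlet--Neumann problem, Lieberman-type edge regularity for $\theta<\tfrac{\pi}{2}$, Reilly's formula with the $T$-contribution reduced to $-\lambda^2\tan\theta\,|\Gamma|$ via the capillary angle relation, and the Cauchy--Schwarz step; your optimization over $\lambda$ produces exactly the constant $c=-\tfrac{n}{n+1}\cot\theta\,\tfrac{|T|}{|\Gamma|}$ that the paper fixes from the outset. The equality analysis and the two equivalent forms of the inequality also match the paper's argument.
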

	
	\begin{theorem}[Heintze-Karcher type inequality in a half ball]\label{Thm-HK-halfball}
		Let  $\theta\in(0,\frac{\pi}{2}]$ and  $\S\subset \bar{\mathbb{B}}^{n+1}_+$ be a smooth, compact, embedded, mean convex $\theta$-capillary hypersurface. Let $\O$ be the enclosed domain by $\S$ and $\SS^n$. Then it holds, 
		\begin{align}\label{EQ-HK-halfball}
			\int_\S\frac{x_{n+1}}{H} dA\ge\frac{n+1}{n} \int_\O x_{n+1}dx -\cos\theta\frac{\left(\int_\S \<\nu, E_{n+1}\>dA\right)^2}{\int_\S H\<\nu, E_{n+1}\> dA},
		\end{align}
		where $\nu$ is the outward unit normal to $\S$ (with respect to $\O$) and $E_{n+1}=(0,\cdots, 0, 1)$.
		Equivalently, 
		\begin{align}\label{EQ-HK-halfball2}
			\int_\S\frac{x_{n+1}}{H} dA\ge\frac{n+1}{n} \int_\O x_{n+1}dx +\cos\theta\frac{\left(\int_{\p\O\cap \SS^n}x_{n+1}dA\right)^2}{ \int_{\p\S} \<\mu, E_{n+1}\>ds}.
		\end{align}	
		where $\nu$ is the outward unit conormal to $\p\S$ (with respect to $\S$).			
		Equality in \eqref{EQ-HK-halfball} or \eqref{EQ-HK-halfball2} holds if and only if $\S$ is a $\theta$-capillary spherical cap.		
	\end{theorem}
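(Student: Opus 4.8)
The plan is to run, for the weighted quantity $\int_\Sigma x_{n+1}H^{-1}\,dA$, the same scheme as in the proof of Theorem~\ref{Thm-HK-halfspace}, using the harmonic weight $V=x_{n+1}$ on the enclosed domain $\O$ and the Reilly-type formula of Qiu--Xia \cite{QX15}. I would first dispense with the equivalence of \eqref{EQ-HK-halfball} and \eqref{EQ-HK-halfball2}. Since the constant field $E_{n+1}$ is divergence free, the divergence theorem on $\O$, together with the fact that the outward unit normal to $\O$ along $\p\O\cap\SS^n$ is the position vector $x$, gives $\int_\Sigma\<\nu,E_{n+1}\>dA=-\int_{\p\O\cap\SS^n}x_{n+1}\,dA$. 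In the same way, from $\De_\Sigma x=\vec H=-H\nu$ one has $\De_\Sigma x_{n+1}=-H\<\nu,E_{n+1}\>$, so integrating over $\Sigma$ yields $\int_\Sigma H\<\nu,E_{n+1}\>dA=-\int_{\p\Sigma}\<\mu,E_{n+1}\>ds$. Substituting both into \eqref{EQ-HK-halfball} turns it into \eqref{EQ-HK-halfball2} (the sign changes cancel), so it suffices to prove \eqref{EQ-HK-halfball}.

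For the core argument I would solve on $\O$ a mixed boundary value problem
\begin{align*}
\De f=1\ \ \text{in}\ \O,\qquad f=0\ \ \text{on}\ \Sigma,\qquad \p_{\bar\nu}f=\cos\theta\cdot\psi\ \ \text{on}\ \p\O\cap\SS^n,
\end{align*}
where $\bar\nu=x$ is the outward normal along $\SS^n$ and $\psi$ is a datum adapted to the weight $x_{n+1}$ and the umbilicity of $\SS^n$ (in the half-space case of Theorem~\ref{Thm-HK-halfspace}, $\psi$ is merely a constant); this mixed Dirichlet--Neumann problem is uniquely solvable. Feeding $f$ into the Qiu--Xia formula with $V=x_{n+1}$, and using that $x_{n+1}$ is harmonic with vanishing Euclidean Hessian (so the interior terms with $\nabla^2 V$ and $\De V$ drop out and the ambient Ricci term vanishes), one obtains an identity of the shape
\begin{align*}
\int_\O x_{n+1}\big((\De f)^2-|\nabla^2 f|^2\big)dx=\int_\Sigma x_{n+1}H(\p_\nu f)^2\,dA+(\text{terms on }\p\O\cap\SS^n)+(\text{edge terms on }\p\Sigma).
\end{align*}
On $\Sigma$ the Dirichlet condition annihilates all tangential derivatives of $f$, leaving only the displayed term; on $\p\O\cap\SS^n$ I would use that $\SS^n$ is totally umbilic ($\mathrm{II}_{\SS^n}=g_{\SS^n}$) together with $\De_{\SS^n}x_{n+1}=-n\,x_{n+1}$ and $\nabla^2_{\SS^n}x_{n+1}=-x_{n+1}g_{\SS^n}$; along the edge $\p\Sigma$ I would invoke the contact-angle condition $\<\nu,\bar\nu\>=\cos\theta$ to rewrite the conormal derivatives of $f$. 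The aim is that all these boundary and edge contributions either recombine into the interior and Dirichlet quantities or collapse to a term proportional to the correction $\cos\theta\,\frac{\left(\int_{\p\O\cap\SS^n}x_{n+1}\,dA\right)^2}{\int_{\p\Sigma}\<\mu,E_{n+1}\>\,ds}$ in \eqref{EQ-HK-halfball2}.

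Two Cauchy--Schwarz inequalities then finish the estimate. In the interior, $|\nabla^2 f|^2\ge\frac{(\De f)^2}{n+1}=\frac1{n+1}$ bounds the left side above by $\frac{n}{n+1}\int_\O x_{n+1}\,dx$, which, with the boundary identity, controls $\int_\Sigma x_{n+1}H(\p_\nu f)^2\,dA$ from above. On $\Sigma$, Cauchy--Schwarz gives $\big(\int_\Sigma x_{n+1}\p_\nu f\,dA\big)^2\le\big(\int_\Sigma\frac{x_{n+1}}{H}dA\big)\big(\int_\Sigma x_{n+1}H(\p_\nu f)^2\,dA\big)$, and $\int_\Sigma x_{n+1}\p_\nu f\,dA$ is evaluated by integrating $\div(x_{n+1}\nabla f)$ over $\O$ and using $\De f=1$ and the boundary data, which produces $\int_\O x_{n+1}\,dx$ together with boundary contributions that combine with those above. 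Rearranging then gives \eqref{EQ-HK-halfball}. I expect the main obstacle to lie exactly here: the datum $\psi$ must be chosen so that, after the contact-angle relation along $\p\Sigma$ and the umbilic and eigenfunction structure of $x_{n+1}$ on $\SS^n$ are used, every boundary and edge term that is not the desired correction term cancels or merges cleanly; the restriction $\theta\in(0,\tfrac{\pi}{2}]$ should enter here, both in the sign of an edge term and in securing enough regularity of $f$ up to $\p\Sigma$.

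Finally, the equality case. Equality forces equality in the interior trace inequality, i.e.\ $\nabla^2 f=\frac1{n+1}g$ on $\O$, whence $f=\frac1{2(n+1)}\big(|x-x_0|^2-r^2\big)$ for some $x_0$ and $r>0$, so $\Sigma$ lies on the round sphere $\p B_r(x_0)$; equality in the Cauchy--Schwarz on $\Sigma$ forces $H\p_\nu f$ to be constant there, consistently. Being embedded, mean convex and $\theta$-capillary, $\Sigma$ is then a $\theta$-capillary spherical cap. Conversely, for such a cap the mean curvature is constant and the relevant weighted integrals are explicit, so a direct computation verifies equality in \eqref{EQ-HK-halfball}, completing the proof.
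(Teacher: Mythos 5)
Your outline follows the right general template (weighted Reilly formula of Qiu--Xia with $V=x_{n+1}$, two Cauchy--Schwarz steps, equality via umbilicity), and your reduction of \eqref{EQ-HK-halfball} to \eqref{EQ-HK-halfball2} via the divergence theorem and $\Delta_\Sigma x_{n+1}=-H\langle\nu,E_{n+1}\rangle$ is exactly the paper's Proposition on integral identities. But there is a genuine gap at the point you yourself flag as ``the main obstacle'': you leave the boundary datum $\psi$ on $\p\O\cap\SS^n$ undetermined and pose it as a pure Neumann condition $\p_{\bar\nu}f=\cos\theta\,\psi$. The key structural idea of the proof is that the correct condition is a \emph{Robin} condition with a \emph{constant} right-hand side, namely $\bar\n_{\bar N}f-f=c$ on $T=\p\O\cap\SS^n$ with
\begin{align*}
c=-\frac{n}{n+1}\,\cos\theta\,\frac{\int_T x_{n+1}\,dA}{\int_{\p\S}\langle E_{n+1},\mu\rangle\,ds}.
\end{align*}
The reason is that the Qiu--Xia formula \eqref{qx} involves the combination $f_\nu-\frac{V_\nu}{V}f$ on each boundary piece, and on $\SS^n$ one has $\bar N(x)=x$, hence $V_{\bar N}=\langle E_{n+1},x\rangle=x_{n+1}=V$, so $f_{\bar N}-\frac{V_{\bar N}}{V}f=f_{\bar N}-f$. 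Prescribing this quantity to be the constant $c$ is precisely what makes every $T$-term in \eqref{qx} collapse (using also $H^T\equiv n$ and the conformal Killing field $X_{n+1}$), and the specific value of $c$ is what produces the correction term with the right coefficient after the second Cauchy--Schwarz. Without identifying this condition, the ``terms on $\p\O\cap\SS^n$ and edge terms'' in your displayed identity will not recombine, so the argument does not close.

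A second, related gap: solvability of the mixed problem you need is not the routine Dirichlet--Neumann statement you invoke. For the Robin condition $\bar\n_{\bar N}f-f=c$ the zeroth-order coefficient has the unfavorable sign for the standard maximum principle, and the paper has to combine the Fredholm alternative with \emph{two} maximum principles (one of them the half-ball-specific result of Guo--Xia, which depends on the first mixed Robin--Dirichlet eigenvalue of the domain) to rule out nontrivial solutions of the homogeneous problem; it also needs Lieberman's weighted H\"older estimates near the edge $\Gamma$ (valid only for $\theta<\pi/2$) to get $\bar\n^2f\in L^2(\O)$ and $\n^2f\in L^1(T)$, which is what licenses the integrations by parts across the corner. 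Your equality discussion is essentially fine, though the cleaner route (and the paper's) is to restrict $\bar\n^2f=\frac{1}{n+1}g_{\rm euc}$ to $\S$, use $f|_\S=0$ to get $h_{ij}f_\nu=\frac{1}{n+1}g_{ij}$, and conclude umbilicity directly.
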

	
	When $\theta=\frac{\pi}{2}$, the Heintze-Karcher type inequality in $\rr^{n+1}_+$ is a simple consequence of the classical one \eqref{hk} since one could make a mirror reflection of $\S$ through $\p \rr^{n+1}_+$ to get a closed mean convex hypersurface in $\rr^{n+1}$, while the Heintze-Karcher type inequality in $\mathbb{B}^{n+1}_+$ is exactly \eqref{hk1} proved in \cite{WX19}.
	Up to our knowledge, the Heintze-Karcher type inequality in $\rr^{n+1}_+$ or $\mathbb{B}^{n+1}_+$ is new for capillary hypersurfaces with  contact angle other than $\frac{\pi}{2}$.
	
	The Minkowski type formula for a $\theta$-capillary hypersurface $\S\subset \bar \rr^{n+1}_+$ is 
	\begin{align}\label{mink-halfspace3}
		\int_\S n(1-\cos\theta\<\nu, E_{n+1}\>) -H\<x,\nu\>dA=0,\end{align}
	and for $\S\subset \bar{\mathbb{B}}^{n+1}_+$ is
	\begin{align}\label{mink-halfspace3}
		\int_\S n(x_{n+1}+\cos\theta\<\nu, E_{n+1}\>) -H\<X_{n+1},\nu\>dA=0,\end{align}
	where 
	\begin{align}\label{Xconf}
	X_{n+1}=x_{n+1}x-\frac12(|x|^2+1)E_{n+1},\end{align}
	 see \cite{AS16, WX19}.
	As a consequence of the Heintze-Karcher type inequality and the Minkowski type formula, we have the following Alexandrov type theorems.	
	\begin{corollary}\label{Theorem2-halfspace}
		Let $\theta_i\in(0,\frac{\pi}{2}]$ and $\S\subset \bar \rr^{n+1}_+$ be a smooth, compact, embedded constant mean curvature $\theta$-capillary hypersurface. Then
		\begin{align}\label{EQ-HK-halfspace3}
			\int_\S\frac{n(1-\cos\theta\<\nu, E_{n+1})\>}{H}dA\ge (n+1)|\O|.		\end{align}
		Moreover, $\S$ is a $\theta$-capillary spherical cap.\end{corollary}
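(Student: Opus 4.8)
The plan is to combine the Heintze--Karcher type inequality of \Cref{Thm-HK-halfspace} with the half-space Minkowski type formula \eqref{mink-halfspace3}, using crucially that the mean curvature is now a positive constant $H$. First I would record two consequences of the divergence theorem on $\O$, whose boundary is $\S$ together with the flat piece $T:=\p\O\cap\p\rr^{n+1}_+$, on which the outward unit normal is $-E_{n+1}$: applied to the constant field $E_{n+1}$ it gives $\int_\S\<\nu,E_{n+1}\>\,dA=|T|>0$, and applied to the position field $x$, using $\<x,-E_{n+1}\>=-x_{n+1}=0$ on $T$, it gives $\int_\S\<x,\nu\>\,dA=(n+1)|\O|$. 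The first identity guarantees $\int_\S H\<\nu,E_{n+1}\>\,dA=H|T|\neq0$, so the right-hand side of \eqref{EQ-HK-halfspace} is well defined.

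Next I would feed $H\equiv\mathrm{const}$ into \eqref{EQ-HK-halfspace}. Setting $a:=\int_\S\<\nu,E_{n+1}\>\,dA>0$, the quotient there collapses to $\cos\theta\,a^2/(Ha)=\cos\theta\,a/H$, so \eqref{EQ-HK-halfspace} becomes $|\S|/H\ge\frac{n+1}{n}|\O|+\cos\theta\,a/H$. Multiplying by $H$ and transferring $\cos\theta\,a$ to the left turns the left-hand side into $|\S|-\cos\theta\,a=\int_\S(1-\cos\theta\<\nu,E_{n+1}\>)\,dA$, so that
\begin{align*}
\int_\S\big(1-\cos\theta\<\nu,E_{n+1}\>\big)\,dA\ge \tfrac{n+1}{n}H|\O|;
\end{align*}
multiplying by $n/H>0$ gives precisely \eqref{EQ-HK-halfspace3}. (For $\theta\in(0,\tfrac\pi2]$ the integrand $1-\cos\theta\<\nu,E_{n+1}\>$ is positive, so both sides of \eqref{EQ-HK-halfspace3} are positive, as expected.)

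For the rigidity statement I would then observe that \eqref{EQ-HK-halfspace3} is actually an equality. Since $H$ is constant, the Minkowski type formula \eqref{mink-halfspace3} rearranges to $\int_\S n(1-\cos\theta\<\nu,E_{n+1}\>)\,dA=H\int_\S\<x,\nu\>\,dA$, and by the second divergence identity the right-hand side equals $(n+1)H|\O|$; dividing by $H$ gives $\int_\S\frac{n(1-\cos\theta\<\nu,E_{n+1}\>)}{H}\,dA=(n+1)|\O|$. Hence equality holds throughout the previous paragraph, and since every manipulation there (multiplying or dividing by $H>0$, adding $\cos\theta\,a$) is reversible, equality holds in \eqref{EQ-HK-halfspace}; by the equality characterization in \Cref{Thm-HK-halfspace}, $\S$ must be a $\theta$-capillary spherical cap. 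The real content of the corollary thus lies entirely in \Cref{Thm-HK-halfspace} and in \eqref{mink-halfspace3}; the only points requiring care here are that the flux $a$ is strictly positive (so the algebra is legitimate) and that the chain of steps is reversible (so that the Minkowski identity genuinely forces the equality case rather than merely an inequality).
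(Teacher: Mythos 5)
Your proposal is correct and takes essentially the same route as the paper: specialize the Heintze--Karcher inequality \eqref{EQ-HK-halfspace} to constant $H$, then use the Minkowski type formula \eqref{Minkowski-Poly} (together with $\int_\S\<x,\nu\>\,dA=(n+1)|\O|$ and the positivity of the flux $\int_\S\<\nu,E_{n+1}\>\,dA=|T|$) to see that the inequality is in fact an equality, and conclude via the rigidity case of \cref{Thm-HK-halfspace}. The only difference is that you spell out the rearrangement into the form \eqref{EQ-HK-halfspace3} and the well-definedness of the quotient term, which the paper leaves implicit.
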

	\begin{corollary}\label{Theorem2-ball}
		Let $\theta_i\in(0,\frac{\pi}{2}]$ and $\S\subset \bar{\mathbb{B}}^{n+1}_+$ be a smooth, compact, embedded constant mean curvature $\theta$-capillary hypersurface. Then
		\begin{align}\label{EQ-HK-halfball3}
			\int_\S\frac{n(x_{n+1}+\cos\theta\<\nu, E_{n+1})\>}{H}dA\ge (n+1)\int_\O x_{n+1}dx.		\end{align}
		Moreover, $\S$ is a $\theta$-capillary spherical cap.\end{corollary}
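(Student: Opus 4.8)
The plan is to deduce both assertions of the corollary from the Heintze-Karcher type inequality in \cref{Thm-HK-halfball} together with the Minkowski type formula \eqref{mink-halfspace3} for $\theta$-capillary hypersurfaces in $\bar{\mathbb{B}}^{n+1}_+$, mimicking the way Alexandrov's theorem follows from the classical \eqref{hk} and the Minkowski-Hsiung formula. Since all the analytic content is already contained in \cref{Thm-HK-halfball}, what is left is a purely algebraic manipulation plus one divergence-theorem computation for the vector field $X_{n+1}$ defined in \eqref{Xconf}; the two things to stay careful about are the nonvanishing of the denominators that appear and the vanishing of the flux through $\p\O\cap\SS^n$.

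First I would specialize \eqref{EQ-HK-halfball} to the case at hand, where $H$ is a positive constant. Then $\int_\S H\<\nu,E_{n+1}\>dA=H\int_\S\<\nu,E_{n+1}\>dA$, and the divergence theorem applied to the constant field $E_{n+1}$ on $\O$, whose boundary is $\S\cup(\p\O\cap\SS^n)$ with outward unit normal $\nu$ along $\S$ and the position vector $x$ along $\p\O\cap\SS^n$, gives
\[
\int_\S\<\nu,E_{n+1}\>dA=-\int_{\p\O\cap\SS^n}x_{n+1}dA,
\]
which is strictly negative, in particular nonzero, because $\p\S\neq\emptyset$ forces $\p\O\cap\SS^n$ to be a region of positive measure on which $x_{n+1}>0$ away from the equator. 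Hence the quotient in \eqref{EQ-HK-halfball} reduces to $\tfrac{1}{H}\int_\S\<\nu,E_{n+1}\>dA$; moving this term to the left-hand side and multiplying through by $n$ converts \eqref{EQ-HK-halfball} into exactly \eqref{EQ-HK-halfball3}, which establishes the stated inequality.

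Next I would evaluate the left-hand side of \eqref{EQ-HK-halfball3} exactly by means of the Minkowski formula \eqref{mink-halfspace3}, namely $\int_\S\big(n(x_{n+1}+\cos\theta\<\nu,E_{n+1}\>)-H\<X_{n+1},\nu\>\big)dA=0$ with $X_{n+1}$ as in \eqref{Xconf}. A short computation gives $\div X_{n+1}=(n+2)x_{n+1}-x_{n+1}=(n+1)x_{n+1}$, while $\<X_{n+1},x\>=\tfrac{1}{2}x_{n+1}(|x|^2-1)$ vanishes along $\SS^n$, so the spherical part of $\p\O$ contributes no flux; hence the divergence theorem on $\O$ yields $\int_\S\<X_{n+1},\nu\>dA=\int_\O\div X_{n+1}\,dx=(n+1)\int_\O x_{n+1}dx$. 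Inserting this into the Minkowski formula and dividing by the constant $H$ gives
\[
\int_\S\frac{n(x_{n+1}+\cos\theta\<\nu,E_{n+1}\>)}{H}dA=\int_\S\<X_{n+1},\nu\>dA=(n+1)\int_\O x_{n+1}dx.
\]

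Comparing this identity with \eqref{EQ-HK-halfball3}, the inequality supplied by \cref{Thm-HK-halfball} is forced to be an equality, equivalently equality holds in \eqref{EQ-HK-halfball}; by the equality characterization in \cref{Thm-HK-halfball} this means $\S$ is a $\theta$-capillary spherical cap, which completes the proof. The half-space statement \cref{Theorem2-halfspace} follows by the identical scheme from \cref{Thm-HK-halfspace} and the half-space case of \eqref{mink-halfspace3}, with $\<x,\nu\>$ and $\div x=n+1$ playing the roles of $\<X_{n+1},\nu\>$ and $\div X_{n+1}$, and using that the position field $x$ is tangent to $\p\rr^{n+1}_+$ so that it has no flux there. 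I do not anticipate a genuine obstacle: the real difficulty is packaged into \cref{Thm-HK-halfball}, and the remainder is the standard ``Heintze-Karcher plus Minkowski'' bookkeeping, so the only vigilance needed is precisely at the two points flagged at the outset.
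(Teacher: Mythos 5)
Your proposal is correct and follows essentially the same route as the paper: specialize the Heintze--Karcher inequality \eqref{EQ-HK-halfball} to constant $H$, rewrite it as \eqref{EQ-HK-halfball3}, and then use the Minkowski formula \eqref{Minkowski-Ball} together with $\bar{\rm div}\,X_{n+1}=(n+1)x_{n+1}$ and the tangency of $X_{n+1}$ to $\SS^n$ to force equality, whence the rigidity in \cref{Thm-HK-halfball} gives the spherical cap. Your explicit checks that $\int_\S\<\nu,E_{n+1}\>dA=-\int_T x_{n+1}\,dA\neq 0$ and that $\int_\S\<X_{n+1},\nu\>dA=(n+1)\int_\O x_{n+1}\,dx$ are exactly the points the paper leaves implicit.
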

	
		In view of the Minkowski type formulas,  we conjectured that \eqref{EQ-HK-halfspace3} and \eqref{EQ-HK-halfball3} holds for any compact, embedded $\theta$-capillary hypersurface in $\bar \rr^{n+1}_+$ and $\bar{\mathbb{B}}^{n+1}_+$ respectively\footnote{The conjecture will be solved by using different method in the forthcoming paper \cite{JWXZ22}}.
	
	Our proof of the the Heintze-Karcher type inequality is motivated from that by Ros \cite{Ros87} and Wang-Xia \cite{WX19}. We illustrate the idea in the case of $\rr^{n+1}_+$. The key point is to use the solution of the following mixed boundary value problem in $\O$:
	\begin{align}\label{MixedBdryPb-halfspace}
		\begin{cases}
			\bar\Delta f=1 \quad&\text{in }\O,\\
			f=0 \quad&\text{on }\S,\\
			\bar\n_{\bar N} f=c &\text{on }\p \O\cap \p \rr^{n+1}_+,	\end{cases}	
	\end{align}
	where $\bar N=-E_{n+1}$ is the downward unit normal to $\p \rr^{n+1}_+$ and $c$ is a suitable chosen constant defined in \eqref{c1}.
	The existence of a unique solution $f\in C^\alpha(\bar \O)\cap C^\infty(\O)$ to \eqref{MixedBdryPb-halfspace} has been shown by Lieberman \cite{Liebermann86} by using Perron's method. Moreover, it has been proved by Lieberman \cite{Liebermann89} that $f\in C^{1,\alpha}(\bar \O)$ for some $\alpha\in (0, 1)$ and $\bar \n^2 f\in L^2(\O)$ provided $\theta<\frac{\pi}{2}$. We will review the existence and regularity  in \cref{Sec3} and \cref{Appendix}.
	The regularity enables us to use the solution $f$ to \eqref{MixedBdryPb-halfspace} in the classical Reilly formula. After carefully rearranging term and integration by parts, we arrive at the Heintze-Karcher type inequality.
	
	We remark that our proof also works for the contact angle $\theta\in (\frac{\pi}{2}, \pi)$,
	except for the lack of regularity for \eqref{MixedBdryPb-halfspace}.
	In this case, Lieberman's optimal regularity result for general boundary data for the mixed boundary value problem is not enough for applying Reilly's formula. Nevertheless, we expect that the regularity can be improved for the special boundary data in \eqref{MixedBdryPb-halfspace}, that is,
	a vanishing Dirichlet condition on one part of the boundary and a constant Neumann condition on the other part, in order to apply Reilly's formula to get the 
	Heintze-Karcher type inequality for $\theta\in (\frac{\pi}{2}, \pi)$.
	
	\section{Preliminaries}\label{Sec2}

	Let $B$ be either $\bar \rr^{n+1}_+$ or $\bar{\mathbb{B}}^{n+1}$. Let $\O$ be a bounded domain in $B$ with piecewise smooth boundary $\p\O=\S\cup T$, where $\S=\overline{\p\O\cap B}$ is a smooth compact embedded $\theta$-capillary hypersurface in $\bar B$ and $T=\p\O\cap \p B$. 
	Denote the corner by $\Gamma=\S\cap T=\p\S=\p T$, which is a smooth co-dimension two submanifold in $\rr^{n+1}$. We use the following notation for normal vector fields. Let $\nu$ and $\bar N$ be the outward unit normal to $\S$ and $\p B$ (with respect to $\O$) respectively. Let $\mu$ be the outward unit co-normal to $\Gamma=\p\S\subset \S$ and $\bar \nu$ be the outward unit co-normal to $\Gamma=\p T\subset T$. Under this convention, along $\Gamma$ the bases $\{\nu,\mu\}$ and $\{\bar \nu,\bar N\}$ have the same orientation in the normal bundle of $\p \S\subset \rr^{n+1}$. In particular, $\S$ is $\theta$-capillary if along $\Gamma$, \begin{eqnarray}\label{munu}
		&&\mu=\sin \theta \bar N+\cos\theta \bar \nu,\quad \nu=-\cos \theta \bar N+\sin \theta \bar \nu.
	\end{eqnarray}
	
	We denote by $\bar \n$, $\bar \Delta$, $\bar \n^2$ and $\bar{\rm div}$, the gradient, the Laplacian, the Hessian and the divergence on $\rr^{n+1}$ respectively, while by $\n$, $\Delta$, $\n^2$ and ${\rm div}$, the gradient, the Laplacian, the Hessian and the divergence on the smooth part of $\p \O$, respectively. Let $g$, $h$ and $H$ be the first, second fundamental forms and the mean curvature  of the smooth part of $\p \O$ respectively. Precisely, $h(X, Y)=\<\bar\n_X\nu, Y\>$ and $H={\rm tr}_g(h)$. 
	
	We need the following version of Reilly's formula.
	\begin{proposition}
		Let $\O$ be a bounded domain in $B$  
		with piecewise smooth boundary $\p\O$ as above. Let $f\in C^\infty(\bar\O\setminus \Gamma)$ such that $\bar\n^2 f\in L^2(\O)$. Then we have 
		\begin{eqnarray}\label{reilly}
			&&\int_\O  (\bar\Delta f)^2-\left|\bar\n^2 f\right|^2 dx
			\\&=&\int_{\p\O}  f_\nu \De f-g(\n f_\nu, \n f) +H f_\nu^2 +h(\n f, \n f)dA. \nonumber
		\end{eqnarray}
		Denote $V=x_{n+1}$ and assume $V\ge 0$ in $\O$. Then
		\begin{eqnarray}\label{qx}
			&&\int_\O  V\left((\bar\Delta f)^2-\left|\bar\n^2 f\right|^2\right) dx 
			\\&=&\int_{\p\O} V\left(f_\nu-\frac{V_\nu}{V}f\right)\left(\De f-\frac{\De V}{V}f\right)dA \nonumber\\&& - \int_{\p\O} Vg\left(\n \left(f_\nu-\frac{V_\nu}{V}f\right), \n f-\frac{\n V}{V}f\right)dA 
			\nonumber\\&&+\int_{\p\O} VH\left(f_\nu-\frac{V_\nu}{V}f\right)^2 +\left(h-\frac{V_{\nu}}{V}g\right)\left(\n f-\frac{\n V}{V}f, \n f-\frac{\n V}{V}f\right)dA.\nonumber
		\end{eqnarray}
	\end{proposition}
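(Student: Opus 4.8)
The plan is to derive both identities from the pointwise Bochner identity together with the divergence theorem on $\O$ alone, so that the codimension-two corner $\Gamma=\S\cap T$ contributes no boundary term, and then to remove the regularity restriction on $f$ by an exhaustion argument. For a smooth function $f$ on $\bar\O$, the ambient Ricci curvature of $\rr^{n+1}$ being zero, the Bochner identity takes the divergence form
\[
\bar{\rm div}\left((\bar\Delta f)\bar\n f-\tfrac12\bar\n|\bar\n f|^2\right)=(\bar\Delta f)^2-|\bar\n^2 f|^2 ,
\]
so integrating over $\O$ and applying the divergence theorem on the Lipschitz domain $\O$ rewrites the left side as $\int_{\p\O}\big(f_\nu\bar\Delta f-\tfrac12\p_\nu|\bar\n f|^2\big)\,dA$. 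On each smooth face of $\p\O$ I would split $\bar\n f=\n f+f_\nu\nu$ and use the Gauss-type identity $\bar\n^2 f(X,\nu)=g(\n f_\nu,X)-h(X,\n f)$ for $X$ tangent, together with $\bar\n^2 f(\nu,\nu)=\bar\Delta f-\Delta f-Hf_\nu$, to rewrite the boundary integrand pointwise as $f_\nu\Delta f-g(\n f_\nu,\n f)+Hf_\nu^2+h(\n f,\n f)$, which is \eqref{reilly}. For \eqref{qx} one runs the identical scheme with the weight $V=x_{n+1}$, which is affine on $\rr^{n+1}$, hence $\bar\n^2 V\equiv0$, $\bar\Delta V\equiv0$ and $V$ is a static potential; one integrates $\bar{\rm div}\big(V[(\bar\Delta f)\bar\n f-\tfrac12\bar\n|\bar\n f|^2]\big)$, absorbs the first-order remainder involving $\bar\n V$ into divergences of auxiliary fields built from $f$ and $\bar\n V$ — the vanishing of $\bar\n^2 V$ being exactly what closes the bookkeeping — and regroups the boundary terms into the $V$-modified quantities $f_\nu-\frac{V_\nu}{V}f$, $\n f-\frac{\n V}{V}f$, $\Delta f-\frac{\Delta V}{V}f$, $h-\frac{V_\nu}{V}g$, using $\bar\n^2 V(\nu,\nu)=0$ (equivalently $\Delta V=-HV_\nu$ along $\p\O$). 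For smooth $f$ this is precisely the formula of Qiu--Xia \cite{QX15} specialised to the flat half-space and may simply be quoted.

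The genuinely new point is that $\O$ is only piecewise smooth and $f$ is merely $C^\infty(\bar\O\setminus\Gamma)$ with $\bar\n^2 f\in L^2(\O)$, so the smooth-data identity does not apply directly. I would exhaust $\O$ by the Lipschitz domains $\O_\e=\O\setminus\overline{B_\e(\Gamma)}$, whose boundary is made of the portions of $\S$ and $T$ outside $B_\e(\Gamma)$ together with the interior collar $\S_\e:=\p B_\e(\Gamma)\cap\O$, and on whose closure $f$ is smooth; I apply the smooth-data identity on each $\O_\e$ and let $\e\to0$. The interior integrals converge by dominated convergence, since $(\bar\Delta f)^2\le(n+1)|\bar\n^2 f|^2\in L^1(\O)$; the boundary integrals over the $\S$- and $T$-portions converge to the integrals over $\S$ and $T$ because $f$ is smooth up to $\bar\O\setminus\Gamma$ (and in every situation where the Proposition is used one has in addition $f\in C^{1,\alpha}(\bar\O)$ by \cite{Liebermann89}, while the boundary data $f|_\S=0$ and $\bar\n_{\bar N}f|_T$ constant make the genuinely second-order boundary terms either disappear or reduce, after an integration by parts on the flat face $T$, to a $\Gamma$-integral of first-order quantities).

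The main obstacle is to see that the collar contribution $\int_{\S_\e}\langle W,\nu\rangle\,dA$ vanishes as $\e\to0$, where $W=(\bar\Delta f)\bar\n f-\tfrac12\bar\n|\bar\n f|^2$ (respectively the analogous field carrying the bounded factors $V$ and $\bar\n V$ for \eqref{qx}). Since $\Gamma$ is a compact submanifold of codimension two, $\mathcal{H}^n(\S_\e)=O(\e)$; on the other hand the coarea formula applied to $\mathrm{dist}(\cdot,\Gamma)$ gives $\int_0^{\e_0}\!\big(\int_{\S_\e}|\bar\n^2 f|^2\,dA\big)\,d\e\le\|\bar\n^2 f\|_{L^2(\O)}^2<\infty$, so there is a sequence $\e_j\to0$ with $\e_j\int_{\S_{\e_j}}|\bar\n^2 f|^2\,dA\to0$. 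As $|\bar\n f|$ is bounded near $\Gamma$, one has $|W|\le C|\bar\n^2 f|$ there, and Cauchy--Schwarz gives
\[
\int_{\S_{\e_j}}\big|\langle W,\nu\rangle\big|\,dA\ \le\ C\,\mathcal{H}^n(\S_{\e_j})^{1/2}\Big(\int_{\S_{\e_j}}|\bar\n^2 f|^2\,dA\Big)^{1/2}\ \longrightarrow\ 0 .
\]
Passing to this subsequence in the identity on $\O_{\e_j}$ yields \eqref{reilly}, and the same passage with the bounded weight $V$ and $\bar\n V$ yields \eqref{qx}.
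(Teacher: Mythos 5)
Your argument is correct, and its algebraic core coincides with the paper's: both proofs integrate $\bar{\rm div}\,Y$ for $Y=(\bar\Delta f)\bar\n f-\tfrac12\bar\n|\bar\n f|^2$, rearrange $\<Y,\nu\>$ on each smooth face via the Gauss--Weingarten relations exactly as in Reilly, and obtain the weighted identity \eqref{qx} by the same scheme with the affine weight $V$ (quoting Qiu--Xia \cite{QX15}). Where you genuinely depart from the paper is in the treatment of the corner $\Gamma$: the paper disposes of it in one line by invoking the generalized divergence theorem of \cite[Lemma 2.1]{PT20} for fields $Y\in L^2(\O)$ with $\bar{\rm div}\,Y\in L^1(\O)$, whereas you reprove the needed statement from scratch by exhausting $\O$ with $\O_\e=\O\setminus\overline{B_\e(\Gamma)}$ and killing the collar contribution along a subsequence selected by the coarea formula, playing $\mathcal H^n(\S_{\e})=O(\e)$ against $\e_j\int_{\S_{\e_j}}|\bar\n^2 f|^2\,dA\to 0$. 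This buys self-containedness at the cost of length; the paper buys brevity at the cost of an external citation. Two caveats, neither of which puts you below the paper's own level of rigor: your estimate $|W|\le C|\bar\n^2 f|$ near $\Gamma$ uses boundedness of $\bar\n f$, which does not follow from the stated hypotheses $f\in C^\infty(\bar\O\setminus\Gamma)$, $\bar\n^2 f\in L^2(\O)$ alone --- but the paper's assertion that $Y\in L^2(\O)$ hides the same assumption, and in every application $f\in C^{1,\alpha}(\bar\O)$ by Lieberman's estimates, so this is harmless; likewise the convergence of the boundary integrals over the $\S$- and $T$-portions (the second-order terms such as $\De f$ on $T$ are only known to be integrable via the decay $|\bar\n^2 f|\le Cd_\Gamma^{-\beta}$ established later) is left implicit in both arguments.
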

	\begin{proof}
		Let $Y=\bar\Delta f\bar \n f-\frac12\bar \n |\bar \n f|^2$. Then $$\bar{\rm div}(Y)= (\bar\Delta f)^2-\left|\bar\n^2 f\right|^2,$$
		By the assumption $\bar\n^2 f\in L^2(\O)$, we know that
		$Y\in L^2(\O)$ and $\bar{\rm div}(Y)\in L^1(\O)$.
		It follows from the divergence theorem in \cite[Lemma 2.1]{PT20} that 
		$$\int_\O \bar{\rm div}(Y) dx=\int_{\p\O} \<Y, \nu_{\p\O}\>dA.$$
		It is now a standard argument by using Gauss-Weingarten formula to rearrange the boundary integrand as in Reilly \cite{Reilly77} so as to get \eqref{reilly}.
		Formula \eqref{qx} follows similarly as in Qiu-Xia \cite{QX15} and Li-Xia \cite{LX19}, taking account of the divergence theorem in \cite[Lemma 2.1]{PT20} as before.
	\end{proof}
	
	Next we need the following structural lemma for compact hypersurfaces in $\rr^{n+1}$ with boundary, these results are well-known and widely used, see \cite{AS16, Souam21}.
	
	\begin{lemma}\label{PropAS16-2.4}
		Let $\S\subset \rr^{n+1}$ be a smooth compact hypersurface with boundary. Then it holds that
		\begin{align}\label{AS16-(2.3)}
			n\int_{\S}\nu dA=\int_{\p \S}\left\{\left<x,\mu\right>\nu-\left<x,\nu\right>\mu\right\}ds.
		\end{align}
	\end{lemma}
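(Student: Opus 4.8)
\emph{Proof strategy.} The plan is to establish \eqref{AS16-(2.3)} one component at a time by applying the tangential divergence theorem on $\S$. Fix $i\in\{1,\dots,n+1\}$, let $E_1,\dots,E_{n+1}$ be the standard basis of $\rr^{n+1}$, and write $x_i=\<x,E_i\>$, $\nu_i=\<\nu,E_i\>$, $\mu_i=\<\mu,E_i\>$. Let $\{e_\alpha\}_{\alpha=1}^n$ be a local orthonormal frame tangent to $\S$, let $x^\top=x-\<x,\nu\>\nu$ be the tangential part of the position field, and recall that the tangential gradient of the coordinate function is $\n x_i=E_i-\nu_i\nu$. Consider the vector field on $\S$
\[
	X^{(i)}:=\nu_i\,x^\top-\<x,\nu\>\,\n x_i,
\]
which is tangent to $\S$. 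Along $\p\S=\Gamma$, since $\mu\perp\nu$ we have $\<x^\top,\mu\>=\<x,\mu\>$ and $\<\n x_i,\mu\>=\mu_i$, hence
\[
	\<X^{(i)},\mu\>=\nu_i\<x,\mu\>-\<x,\nu\>\mu_i,
\]
which is exactly the $i$-th component of the boundary integrand in \eqref{AS16-(2.3)}.

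Next I would compute $\div X^{(i)}$ on the smooth part of $\S$. By the Leibniz rule and the two classical identities $\div(x^\top)=n-H\<x,\nu\>$ and $\De x_i=-H\nu_i$ --- both consequences of the Gauss--Weingarten formulas, since $\bar\n_{e_\alpha}x=e_\alpha$ and $\bar\n_{e_\alpha}E_i=0$ --- the two terms carrying the factor $H\<x,\nu\>\nu_i$ cancel, leaving
\[
	\div X^{(i)}=n\nu_i+\<\n\nu_i,x^\top\>-\<\n\<x,\nu\>,\n x_i\>.
\]
The remaining point is that the last two terms cancel, which is the one place where a short computation is needed: differentiating $\<\nu,E_i\>$ and $\<x,\nu\>$ in a tangent direction and using that $\bar\n_{e_\alpha}\nu$ is tangent to $\S$, one gets $\<\n\nu_i,e_\alpha\>=h(e_\alpha,(E_i)^\top)$ and $\<\n\<x,\nu\>,e_\alpha\>=h(e_\alpha,x^\top)$, so that by symmetry of the second fundamental form $h$ both inner products equal $h\big((E_i)^\top,x^\top\big)$. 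Hence $\div X^{(i)}=n\nu_i$.

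Finally, since $X^{(i)}$ is tangent to $\S$ (so that no mean-curvature term appears), the divergence theorem on the compact manifold-with-boundary $\S$ gives
\[
	n\int_\S\nu_i\,dA=\int_\S\div X^{(i)}\,dA=\int_{\p\S}\<X^{(i)},\mu\>\,ds=\int_{\p\S}\big(\nu_i\<x,\mu\>-\<x,\nu\>\mu_i\big)\,ds .
\]
Multiplying by $E_i$ and summing over $i=1,\dots,n+1$ yields \eqref{AS16-(2.3)}. The only genuinely computational step is the cancellation $\<\n\nu_i,x^\top\>=\<\n\<x,\nu\>,\n x_i\>$, which rests on the symmetry of $h$; the rest is a direct application of the divergence theorem on $\S$. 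Equivalently, one may test against an arbitrary constant vector $a\in\rr^{n+1}$, using the tangential field $\<a,\nu\>x^\top-\<x,\nu\>a^\top$ with $a^\top=a-\<a,\nu\>\nu$.
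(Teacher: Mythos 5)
Your proof is correct and follows essentially the same route as the paper: the tangential field $X^{(i)}=\nu_i\,x^\top-\<x,\nu\>\,\n x_i$ is precisely the paper's $Z=\<\nu,e\>x^T-\<x,\nu\>e^T$ with $e=E_i$ (since $\n x_i=(E_i)^\top$), and both arguments reduce to $\div Z=n\<\nu,e\>$ plus the divergence theorem on $\S$. You simply carry out in detail the cancellation via the symmetry of $h$ that the paper compresses into ``one computes that.''
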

	\begin{proof}
		Let $Z=\left<\nu, e\right>x^T- \left<x,\nu\right>e^T$ for some constant vector $e$, where $x^T$ and $e^T$ denote the tangential component of $x$ and $e$ respectively. One computes that
		$${\rm div}(Z)=n\left<\nu, e\right>.$$
		Integration by parts yields the assertion.
	\end{proof}
	
	\section{Mixed boundary value problem}\label{Sec3}
	
	In this section, we collect the existence and regularity results for two mixed boundary problems which will be used in the proof of the main theorems.
	
	\subsection{Half space}
	Let $B=\rr^{n+1}_+$, consider the following mixed boundary value problem in $\O\subset \rr^{n+1}_+$:
	\begin{align}\label{MixedBdryPb-halfspace2}
		\begin{cases}
			\bar\Delta f=1 \quad&\text{in }\O,\\
			f=0 \quad&\text{on }\S,\\
			\bar\n_{\bar N} f=c &\text{on } {\rm int}(T).	\end{cases}	
	\end{align}
	\begin{theorem}
		Assume $\theta\in (0,\frac{\pi}{2})$. There exists a unique solution $f\in C^{\infty}(\bar \O\setminus \Gamma)\cap C^{1,\alpha}(\bar \O)$ to \eqref{MixedBdryPb-halfspace2} such that $|\bar\n^2 f|\le Cd_\Gamma^{-\beta}$ for some $\beta\in (0, 1)$ and some constant $C>0$. Here $d_\Gamma$ is the distance function to $\Gamma$. 
	\end{theorem}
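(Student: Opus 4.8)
The plan is to reduce \eqref{MixedBdryPb-halfspace2} to a harmonic function on the Lipschitz domain $\O$ carrying smooth mixed Dirichlet--Neumann data, to obtain existence and $C^{1,\alpha}$-regularity from Lieberman's theory of mixed boundary value problems, and then to pin down the growth of $\bar\n^2 f$ near the corner $\Gamma$ through the model wedge problem, where the hypothesis $\theta<\frac{\pi}{2}$ is decisive. For the reduction, fix $p(x)=\tfrac12 x_{n+1}^2$, so that $\bar\Delta p=1$ in $\rr^{n+1}$ and $\bar\n_{\bar N}p=0$ along $T\subset\p\rr^{n+1}_+$; then $f$ solves \eqref{MixedBdryPb-halfspace2} if and only if $u:=f-p$ is harmonic in $\O$, equals $-\tfrac12 x_{n+1}^2$ on $\S$, and satisfies $\bar\n_{\bar N}u=c$ on $\mathrm{int}(T)$. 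Note that the reduced Dirichlet datum $-\tfrac12 x_{n+1}^2$ vanishes to second order along $\Gamma$. Hence it suffices to treat the harmonic mixed boundary value problem with smooth data on the smooth faces $\S$ (Dirichlet) and $T$ (Neumann), which meet along $\Gamma$ at the constant dihedral angle $\theta$.

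Since $\O$ is a bounded Lipschitz domain with $\p\O=\S\cup T$ the union of the two smooth faces, existence of a unique bounded $u\in C^{0}(\bar\O)\cap C^{\infty}(\O)$ follows either from a direct variational argument or from Lieberman's Perron-type construction \cite{Liebermann86}. The barrier at $\Gamma$ needed to run the latter is available because $\theta<\tfrac{\pi}{2}$, and is here especially simple: in the two-dimensional slice transverse to $\Gamma$, after flattening the faces and using polar coordinates $(r,\omega)$ with $0<\omega<\theta$, one may build it from $r^{\gamma}\cos(\gamma(\omega-\theta))$ for any $\gamma\in(1,\min\{2,\tfrac{\pi}{2\theta}\})$, which is harmonic and nonnegative, meets the homogeneous Neumann condition on $\{\omega=\theta\}$, and dominates the second-order Dirichlet datum on $\{\omega=0\}$ (the smooth inhomogeneities in the equation and the constant Neumann datum being absorbed by the usual polynomial corrections). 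Interior and flat-boundary Schauder estimates away from $\Gamma$ then upgrade $u$ to $C^{\infty}(\bar\O\setminus\Gamma)$.

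The main step is the corner regularity. Flattening $\Gamma$ to $W_\theta\times\Gamma$ with $W_\theta=\{(r,\omega):0<\omega<\theta\}$ and freezing coefficients, the model operator admits a Kondrat'ev-type expansion whose singular modes are $r^{\gamma_k}\sin(\gamma_k\omega)$, $\gamma_k=\tfrac{(2k+1)\pi}{2\theta}$ for $k\ge0$, the exponents being determined by $\cos(\gamma\theta)=0$, which comes from the Dirichlet condition at $\omega=0$ and the Neumann condition at $\omega=\theta$. The leading exponent $\gamma_0=\tfrac{\pi}{2\theta}$ satisfies $\gamma_0>1$ \emph{precisely because} $\theta<\tfrac{\pi}{2}$; hence $u$ is $C^1$ up to $\Gamma$ --- indeed $u\in C^{1,\alpha}(\bar\O)$ for some $\alpha\in(0,1)$ depending on $\theta$ --- and $|\bar\n^2 u|\le C\,d_\Gamma^{-\beta}$ for every $\beta>\max\{2-\gamma_0,0\}$, so such a $\beta$ can be chosen in $(0,1)$. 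Since $\Gamma$ has codimension two and $2\beta<2$, the weight $d_\Gamma^{-2\beta}$ is integrable on $\O$, whence $\bar\n^2 u\in L^2(\O)$ and $f=u+p$ enjoys all the claimed properties. This step can be cited from Lieberman's optimal Hölder regularity \cite{Liebermann89}; a self-contained proof, via a blow-up/perturbation argument at points of $\Gamma$ built on the eigenfunction expansion of the model wedge operator, is carried out in \cref{Appendix}.

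Finally, uniqueness: the difference $w:=f_1-f_2$ of two solutions is harmonic, vanishes on $\S$, has $\bar\n_{\bar N}w=0$ on $\mathrm{int}(T)$, and by the above lies in $C^{1,\alpha}(\bar\O)$ with $\bar\n^2 w\in L^2(\O)$; applying the divergence theorem of \cite[Lemma 2.1]{PT20} across $\Gamma$ to the field $w\,\bar\n w$ gives $\int_\O|\bar\n w|^2=\int_\S w\,\bar\n_\nu w+\int_{T}w\,\bar\n_{\bar N}w=0$, so $w$ is constant and, vanishing on $\S$, is identically zero. The main obstacle throughout is the corner regularity together with the sharp growth bound on $\bar\n^2 f$; once one recognizes that this rests on the single inequality $\gamma_0=\tfrac{\pi}{2\theta}>1$, equivalently $\theta<\tfrac{\pi}{2}$, the remaining arguments are standard.
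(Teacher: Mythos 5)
Your proposal is correct in substance and reaches the same conclusion, but it is organized differently from the paper's argument, and it is worth being clear about where the two routes genuinely diverge and where they secretly coincide. You first subtract the explicit particular solution $p=\tfrac12 x_{n+1}^2$ (which indeed satisfies $\bar\Delta p=1$ and $\bar\n_{\bar N}p=0$ on $T$) to reduce to a harmonic mixed problem, obtain existence variationally, and prove uniqueness by an energy identity; the paper instead keeps the inhomogeneous problem, establishes a Fredholm alternative in Lieberman's weighted H\"older spaces (\cref{fredholm lemma}, built from \cref{ThmA1}, \cref{regularity thm} and \eqref{c0 estimate}), and rules out a nontrivial kernel by the maximum principle of \cite[Lemma 4.1]{Tang13}. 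Your existence route is arguably more elementary for this particular problem, since the pure Neumann condition on $T$ together with Dirichlet data on $\S$ makes the Dirichlet energy coercive directly, whereas the paper must pass through the Robin problem $\bar\n_{\bar N}f+f=g$ (the only case covered by Lieberman's existence theorem) and then apply the Fredholm alternative; the price is that your weak solution still has to be fed into the same regularity machinery. On the crucial point --- $C^{1,\alpha}(\bar\O)$ regularity and the Hessian bound $|\bar\n^2f|\le Cd_\Gamma^{-\beta}$ near the corner --- the two arguments are the same input in different clothing: your Kondrat'ev exponents $\gamma_k=\tfrac{(2k+1)\pi}{2\theta}$ with leading exponent $\gamma_0=\tfrac{\pi}{2\theta}>1$ are exactly what the paper's Miller barrier $r^\lambda\cos(\lambda\eta)$, $\lambda\in(1,\tfrac{\pi}{2\theta})$, detects, and both proofs ultimately cite \cite{Liebermann89} for the actual estimate rather than proving it. Your promised ``self-contained blow-up proof in \cref{Appendix}'' is not written out, so as it stands your treatment of the corner regularity is no more self-contained than the paper's; if you do carry out the Kondrat'ev expansion you would obtain the slightly sharper exponent $\beta$ close to $2-\gamma_0$ rather than the paper's $\beta=\tfrac{3-\lambda}{2}$, though the theorem only needs some $\beta\in(0,1)$, which both arguments deliver.
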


\begin{proof}	
{\color{black}Notice that the Fredholm alternative \cref{fredholm lemma} is applicable for \eqref{MixedBdryPb-halfspace2} and we proceed by rulling out case (a).}

The maximum principle (see for example \cite[Lemma 4.1]{Tang13}) shows that the homogeneous problem
	\begin{align*}
	    \begin{cases}
	        \bar\Delta f=0 \quad&\text{in }\O,\\
			f=0 \quad&\text{on }\S,\\
			\bar\n_{\bar N} f=0 &\text{on } {\rm int}(T),
	    \end{cases}
	    \end{align*}
     has only the trivial solution, this assures the validity of case $(b)$ in \cref{fredholm lemma}. 
     {\color{black}By \cref{remark-appendix}}, we thus obtain a unique solution $f\in C^{1,\alpha}(\bar \O)$ solving \eqref{MixedBdryPb-halfspace2}.
     
     The regularity $f\in C^{\infty}(\bar \O\setminus \Gamma)$ follows from the classical regularity theory for elliptic equation and the decay estimate $\vert {\bar\nabla^2 f\vert}\leq Cd_\Gamma^{-\beta}$ with $\beta=\frac{3-\lambda}{2}\in (0, 1)$ follows directly from \cref{regularity thm} and \eqref{c0 estimate} by choosing $\lambda\in (1, \min\{3,\frac{\pi}{2\theta}\})$ and $a=\frac{\lambda+3}{2}>2$.
     \end{proof}

\subsection{Half ball}	
	Let $B=\mathbb{B}^{n+1}$ and $V=x_{n+1}$, consider the following mixed boundary value problem in $\O\subset \mathbb{B}^{n+1}_+$:
	\begin{align}\label{MixedBdryPb-halfball2}
		\begin{cases}
			\bar\Delta f=1 \quad&\text{in }\O,\\
			f=0 \quad&\text{on }\S,\\
			\bar\n_{\bar N} f-f=c &\text{on } {\rm int}(T).	\end{cases}	
	\end{align}
	\begin{theorem}
		Assume $\theta\in (0,\frac{\pi}{2})$. There exists a unique solution $f\in C^{\infty}(\bar \O\setminus \Gamma)\cap C^{1,\alpha}(\bar \O)$ to \eqref{MixedBdryPb-halfball2} such that $|\bar\n^2 f|\le Cd_\Gamma^{-\beta}$ for some $\beta\in (0, 1)$ and some constant $C>0$. Here $d_\Gamma$ is the distance function to $\Gamma$. 
	\end{theorem}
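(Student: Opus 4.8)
The plan is to imitate the treatment of \eqref{MixedBdryPb-halfspace2} carried out just above, the only genuinely new feature being the Robin-type condition $\bar\n_{\bar N}f-f=c$ on ${\rm int}(T)$ in place of a pure Neumann condition. First I would observe that, exactly as before, the Fredholm alternative \cref{fredholm lemma} applies to \eqref{MixedBdryPb-halfball2}, so that it suffices to rule out case (a); that is, to show that the homogeneous problem
\begin{align*}
\begin{cases}
\bar\Delta f=0 \quad&\text{in }\O,\\
f=0 \quad&\text{on }\S,\\
\bar\n_{\bar N} f-f=0 &\text{on }{\rm int}(T)
\end{cases}
\end{align*}
has only the trivial solution. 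Once this is known, case (b) holds and \cref{remark-appendix} yields a unique $f\in C^{1,\alpha}(\bar\O)$ solving \eqref{MixedBdryPb-halfball2}.

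The step that needs a new idea is this uniqueness claim, since the zeroth order term in the Robin condition carries the unfavourable sign and a direct maximum principle argument fails. The remedy is to exploit the weight $V=x_{n+1}$, which is harmonic in $\O$, strictly positive in ${\rm int}(\O)$, and satisfies the \emph{same} homogeneous Robin condition along $T\subset\SS^n$: along $T$ the outward unit normal is $\bar N=x$, so $\bar\n_{\bar N}V=\langle E_{n+1},x\rangle=x_{n+1}=V$. Given a solution $f$ of the homogeneous problem, I would rewrite the interior equation as $\bar{\rm div}(V\bar\n f-f\bar\n V)=V\bar\Delta f-f\bar\Delta V=0$ and test it against $f/V$; the boundary integral then splits into the part on $\S$ (which vanishes because $f=0$ there) and the part on $T$ (which vanishes because $\langle V\bar\n f-f\bar\n V,\bar N\rangle=V\bar\n_{\bar N}f-f\bar\n_{\bar N}V=Vf-fV=0$), leaving $\int_\O V^2|\bar\n(f/V)|^2\,dx=0$. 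Hence $f/V$ is constant, and since it vanishes on $\S$ it is identically zero, so $f\equiv 0$. Equivalently, $h:=f/V$ solves the zeroth-order-free equation $\bar{\rm div}(V^2\bar\n h)=0$, which is uniformly elliptic away from $\{x_{n+1}=0\}$, vanishes on $\S$ and has vanishing conormal derivative on ${\rm int}(T)$, whence $h$ is constant by the strong maximum principle and Hopf's lemma. The integrations by parts above involve the weight $V$, which degenerates on $\p\O\cap\{x_{n+1}=0\}$ and near the corner $\Gamma$; I would justify them by exhausting $\O$ and passing to the limit, exactly as in the derivation of the weighted Reilly identity \eqref{qx} following \cite{QX15,WX19} and the divergence theorem in \cite{PT20}.

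It remains to upgrade the regularity. As in the half-space case, $f\in C^\infty(\bar\O\setminus\Gamma)$ follows from interior elliptic regularity together with boundary regularity away from $\Gamma$ — Dirichlet regularity along ${\rm int}(\S)$ and oblique-derivative regularity along ${\rm int}(T)$ — while the Hessian decay $|\bar\n^2 f|\le Cd_\Gamma^{-\beta}$ with $\beta=\frac{3-\lambda}{2}\in(0,1)$ comes directly from \cref{regularity thm} and the $C^0$ estimate \eqref{c0 estimate} upon choosing $\lambda\in\big(1,\min\{3,\tfrac{\pi}{2\theta}\}\big)$ (a nonempty range precisely because $\theta<\tfrac\pi2$) and $a=\frac{\lambda+3}{2}>2$; in particular $\bar\n^2 f\in L^2(\O)$, as required by \eqref{qx}. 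I expect the main obstacle to be the uniqueness step: circumventing the wrong sign in the Robin condition via the substitution $h=f/V$, and then controlling the degeneracy of $V$, are the only places where the half-space argument does not transcribe verbatim.
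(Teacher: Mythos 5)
Your proposal is correct and follows the paper's overall scheme --- invoke the Fredholm alternative \cref{fredholm lemma}, rule out case (a), conclude existence and uniqueness of a $C^{1,\alpha}(\bar\O)$ solution via \cref{remark-appendix}, and obtain smoothness away from $\Gamma$ together with the Hessian decay from \cref{regularity thm} and \eqref{c0 estimate} with $\lambda\in(1,\min\{3,\tfrac{\pi}{2\theta}\})$ --- but you treat the one genuinely new step, uniqueness for the homogeneous Robin problem, by a different route. The paper rules out case (a) by stacking two ready-made maximum principles: \cite[Proposition 2.3]{GX19} gives that either $f\equiv 0$ or $f<0$ in $\O\cup T$, while \cite[Lemma 4.1]{Tang13} applied to $-f$ gives $f\ge 0$, and together these force $f\equiv 0$. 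You instead observe that $V=x_{n+1}$ is a positive harmonic function satisfying the same homogeneous Robin condition $\bar\n_{\bar N}V-V=0$ on $T\subset\SS^n$, and run a Picone/ground-state argument for $h=f/V$, obtaining $\int_\O V^2|\bar\n (f/V)|^2\,dx=0$, hence $f=cV$ and $c=0$ from the Dirichlet condition on $\S$. This is in essence a self-contained proof of the mechanism behind the cited maximum principle (the paper's footnote attributes its validity to the first mixed Robin--Dirichlet eigenvalue, for which $V$ is exactly the positive ground state), so your argument buys independence from \cite{GX19} at the cost of having to justify the weighted integration by parts where $V$ degenerates; your proposed exhaustion is the right tool and is consistent with how the paper itself justifies the weighted Reilly formula \eqref{qx} via \cite[Lemma 2.1]{PT20}, and the key point keeping $f/V$ under control near the degenerate set is that $f$ vanishes on $\bar\S\supset\Gamma$ while interior points of $T$ have $x_{n+1}>0$. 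The remaining regularity assertions coincide with the paper's proof.
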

	\begin{proof}
	{\color{black}We begin by noticing that the Fredholm alternative(\cref{fredholm lemma}) applies for \eqref{MixedBdryPb-halfball2}, and we will rull out case $(a)$ by virtue of two maximum principles.}
	
	 Presicely, let $f$ be the solution of the following homogeneous problem
	\begin{align}\label{homog MBVP-halfball}
	    \begin{cases}
	        \bar\Delta f=0 \quad&\text{in }\O,\\
			f=0 \quad&\text{on }\S,\\
			\bar\n_{\bar N} f-f=0 &\text{on } {\rm int}(T),
	    \end{cases}
	    \end{align}
	  by applying the maximum principle(\cite[Proposition 2.3]{GX19})\footnote{\color{black}The validity of such maximum principle is based on the geometry of the domain $B_+^{n+1}$. Precisely, we note that the first Mixed Robin-Dirichlet eigenvalue of the homogeneous problem plays an important role in the proof. We refer to \cite{GX19} for a detailed account.}to $f$, we see that either $f\equiv 0$ in $\O$ or $f<0$ in $\O\cup T$.
	  
	  On the other hand, by the maximum principle for mixed boundary equation(see for example \cite[Lemma 4.1]{Tang13}) for $-f$, we see that $-f\leq0$. Hence, the homogeneous problem \eqref{homog MBVP-halfball} has only the trivial solution $f\equiv 0$, {\color{black}which assures the validity of case $(b)$ in \cref{fredholm lemma}.}
     Thus, we have found a unique solution $f\in C^{1,\alpha}(\bar \O)$ solving \eqref{MixedBdryPb-halfball2}, due to \cref{remark-appendix}. 
     
     The regularity statements follow similarly as above. 
     \end{proof}
	

	\section{The case of half space}\label{Sec4}
	In this section we let $\O\subset\rr_+^{n+1}$. In this case, $\bar N=-E_{n+1}$. We collect the following integral identities, which has been proved in \cite{AS16}.
	\begin{proposition}\label{PropMinkowski-Poly} It holds that
		\begin{align}
			&\int_{\S}\left<\nu, E_{n+1}\right>dA=|T|, \label{Conservation-halfspace}\\
			&\int_{\S}H \left<\nu, E_{n+1}\right>dA=\sin\theta|\Gamma|,\label{Balancing-halfspace}\\
			& \int_\S n(1-\cos\theta\<\nu, E_{n+1}\>) -H\<x,\nu\>dA=0.\label{Minkowski-Poly}
		\end{align} 	
	\end{proposition}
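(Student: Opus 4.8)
The plan is to derive each of the three identities from the divergence theorem applied on an appropriate set --- the solid domain $\O$, the hypersurface $\S$, or the flat face $T$ --- and then to substitute the capillary relation \eqref{munu}, using crucially that $\p\rr^{n+1}_+=\{x_{n+1}=0\}$, so that $\bar N=-E_{n+1}$ and $\langle x,E_{n+1}\rangle=0$ along $T$ and $\Gamma$.

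For \eqref{Conservation-halfspace}, I would apply the divergence theorem to the constant vector field $E_{n+1}$ on $\O$: since $\bar{\rm div}\,E_{n+1}=0$ and $\p\O=\S\cup T$, this gives $0=\int_\S\langle E_{n+1},\nu\rangle\,dA+\int_T\langle E_{n+1},\bar N\rangle\,dA$, and on $T$ we have $\bar N=-E_{n+1}$, so the second term equals $-|T|$; that is \eqref{Conservation-halfspace}. For \eqref{Balancing-halfspace}, set $E_{n+1}^T=E_{n+1}-\langle E_{n+1},\nu\rangle\nu$, the tangential part of $E_{n+1}$ along $\S$. A short computation using ${\rm div}_\S\nu=H$ (consistent with $h(X,Y)=\langle\bar\n_X\nu,Y\rangle$, $H=\tr_g h$) shows ${\rm div}_\S(E_{n+1}^T)=-H\langle E_{n+1},\nu\rangle$. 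Integrating over $\S$ and using the divergence theorem with $\p\S=\Gamma$ gives
\[
-\int_\S H\langle E_{n+1},\nu\rangle\,dA=\int_\Gamma\langle E_{n+1},\mu\rangle\,ds.
\]
Along $\Gamma$ I would substitute $\mu=\sin\theta\,\bar N+\cos\theta\,\bar\nu$ from \eqref{munu} with $\bar N=-E_{n+1}$; since $\bar\nu$ is tangent to $\{x_{n+1}=0\}$ one has $\langle E_{n+1},\bar\nu\rangle=0$, hence $\langle E_{n+1},\mu\rangle=-\sin\theta$ on $\Gamma$, and \eqref{Balancing-halfspace} follows.

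For \eqref{Minkowski-Poly}, I would start from the classical Minkowski identity ${\rm div}_\S(x^T)=n-H\langle x,\nu\rangle$ (with $x^T$ the tangential part of the position vector) and the divergence theorem on $\S$, obtaining $\int_\S\big(n-H\langle x,\nu\rangle\big)\,dA=\int_\Gamma\langle x,\mu\rangle\,ds$. On $\Gamma$ we have $x_{n+1}=0$, hence $\langle x,\bar N\rangle=-\langle x,E_{n+1}\rangle=0$, and \eqref{munu} gives $\langle x,\mu\rangle=\cos\theta\,\langle x,\bar\nu\rangle$. Then I would apply the divergence theorem to the position vector on the $n$-dimensional flat region $T$ --- whose intrinsic divergence of $x$ equals $n$ and whose outward conormal along $\Gamma$ is $\bar\nu$ --- to get $\int_\Gamma\langle x,\bar\nu\rangle\,ds=n|T|$. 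Combining these and using \eqref{Conservation-halfspace} to rewrite $n|T|=n\int_\S\langle\nu,E_{n+1}\rangle\,dA$ yields $\int_\S(n-H\langle x,\nu\rangle)\,dA=n\cos\theta\int_\S\langle\nu,E_{n+1}\rangle\,dA$, which rearranges to \eqref{Minkowski-Poly}.

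I do not expect a genuine obstacle here: these are classical identities recorded for later use. The only points that need care are the orientation conventions in \eqref{munu}, which fix the signs of the $\sin\theta|\Gamma|$ and $\cos\theta$ terms, and checking that the convention $H=\tr_g h$ matches ${\rm div}_\S(e^T)=-H\langle e,\nu\rangle$. Since $\O$ is only Lipschitz along $\Gamma$, the divergence theorem would be invoked for these bounded smooth integrands exactly as in the proof of the Reilly formula above (e.g.\ via \cite[Lemma 2.1]{PT20}).
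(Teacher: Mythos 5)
Your proofs of \eqref{Conservation-halfspace} and \eqref{Balancing-halfspace} are exactly the paper's: divergence theorem for $E_{n+1}$ on $\O$, and the identity ${\rm div}_\S(E_{n+1}^T)=-H\<\nu,E_{n+1}\>$ together with $\<E_{n+1},\mu\>=-\sin\theta$ from \eqref{munu}. For \eqref{Minkowski-Poly} you take a genuinely different (and correct) route in the second half. Both you and the paper start from ${\rm div}_\S(x^T)=n-H\<x,\nu\>$ to get $\int_\S (n-H\<x,\nu\>)\,dA=\int_\Gamma\<x,\mu\>\,ds$. To evaluate $\int_\Gamma\<x,\mu\>\,ds$, the paper invokes the structural \cref{PropAS16-2.4}, i.e.\ $n\int_\S\nu\,dA=\int_\Gamma\{\<x,\mu\>\nu-\<x,\nu\>\mu\}\,ds$, pairs it with $E_{n+1}$ and uses \eqref{munu} to obtain $n\cos\theta\int_\S\<\nu,E_{n+1}\>\,dA=\int_\Gamma\<x,\mu\>\,ds$. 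You instead write $\<x,\mu\>=\cos\theta\<x,\bar\nu\>$ on $\Gamma$, apply the divergence theorem to the position vector on the flat face $T$ (where ${\rm div}_T(x)=n$) to get $\int_\Gamma\<x,\bar\nu\>\,ds=n|T|$, and close the loop with \eqref{Conservation-halfspace}. The two computations give the same quantity, so your argument is sound. What each buys: your route is more elementary here and avoids \cref{PropAS16-2.4} entirely, but it leans on $T$ being a flat piece of the hyperplane so that ${\rm div}_T(x)=n$; the paper's route through \cref{PropAS16-2.4} transfers verbatim to the half-ball setting of \cref{PropMinkowski-Ball}, where $T\subset\SS^n$ is not flat and one works with the conformal Killing field $X_{n+1}$ instead of $x$, which is presumably why the authors set it up that way. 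Your remarks on orientation conventions and on invoking the divergence theorem in the sense of \cite[Lemma 2.1]{PT20} on the Lipschitz domain are exactly the right points of care.
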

	\begin{proof} We prove it for completeness.
		Since $$\bar{\rm div} E_{n+1}=0,$$ by using integration by parts in $\O$, we get \eqref{Conservation-halfspace}.
		Since $${\rm div}(E_{n+1}^T)=-H\left<\nu, E_{n+1}\right>,$$ by using integration by parts in $\S$ and \eqref{munu}, we get \eqref{Balancing-halfspace}.
		Since $${\rm div}(x^T)=n-H\left<x, \nu\right>,$$ by using integration by parts in $\S$, we get
		\begin{align}\label{xxeq1}
			\int_\S n -H\<x,\nu\>dA=\int_{\Gamma}\<x, \mu\> ds.
		\end{align} 
		On the other hand, by \eqref{AS16-(2.3)} and \eqref{munu}, we get
		\begin{align}\label{xxeq2}
			\int_\S \<\nu, E_{n+1}\> dA&=\int_{\Gamma}\left(\left<x,\mu\right>\<\nu, E_{n+1}\>-\left<x,\nu\right>\<\mu, E_{n+1}\>\right) ds\nonumber
			\\&=\int_{\Gamma}\frac{1}{\cos\theta}\left<x,\mu\right> ds.
		\end{align} 
		Identity \eqref{Minkowski-Poly} follows from \eqref{xxeq1} and \eqref{xxeq2}.
	\end{proof}
\begin{proof}[{\bf Proof of \cref{Thm-HK-halfspace}.}]
 Let $f$ be the solution to \eqref{MixedBdryPb-halfspace2} with 
	\begin{eqnarray}\label{c1}
		c=-\frac{n}{n+1}\cot\theta\frac{  |T|}{|\Gamma|}.
	\end{eqnarray}
	
At each $p\in \Gamma$, we find some neighborhood $B_\delta(p)$ with $\delta>0$ small, such that up to a coordinate transformation, there exists a Cartesian coordinate $(x^1,\ldots,x^{n+1})$ in $B_\delta(p)\cap \bar \O$, such that the {\color{black}corresponding} cylindrical coordinates $(r,\eta,x')$ is given by 
 $$ x_1=r\cos\eta,\quad  x_2=r\sin\eta,\quad x'=(x^3,\ldots,x^{n+1}),$$
 with $r\in [0,\delta)$ and $\eta\in [0,\theta]$.
 	Since $|\bar\n^2 f|\le Cd_\Gamma^{-\beta}$ for $\beta\in (0, 1)$, one see that 
	$$\int_{\O} |\bar\n^2 f|^2 dx\le C_1+ C_2\int_0^\theta\int_0^\delta r^{-2\beta} r dr d\eta<+\infty.$$
	Also since $\n^2 f(X, Y)=\bar \n^2 f(X, Y)+ h(X, Y)\bar\nabla_{\bar N} f$ on $T$ for some tangential vector fields $X, Y$, we see that $$\int_{T} |\n^2 f| dA\le \int_{T} |\bar \n^2 f|+ C_3|\bar \n f| dA\le C_4\int_0^\delta r^{-\beta}  dr +C_5<+\infty.$$
	That means $|\bar\n^2 f|\in L^2(\O)$ and $|\n^2 f|\in L^1(T)$.
	
	In particular, Reilly's formula \eqref{reilly} is applicable for $f$ and we get
	\begin{eqnarray}\label{xeq1}
		\frac{n}{n+1}|\O|=\frac{n}{n+1}\int_\O (\bar\De f)^2 dx\ge \int_{\S} Hf_\nu^2 dA+ c\int_{T} \De f dA.
	\end{eqnarray}
	Here we used the Cauchy-Schwarz inequality, the boundary condition $f=0$ on $\S$ and $\bar\n_{\bar N} f=c$ on $T$, as well as the fact that $h^T\equiv 0$.
	
	Since $|\n^2 f|\in L^1(T)$, one uses the divergence theorem and the dominate convergence theorem to get
	\begin{eqnarray}\label{xeq2}
		\int_{T} \De f dA=\int_{\Gamma} \<\n^T f, \bar \nu\> ds,
	\end{eqnarray}
	where $\n^T$ denotes the covariant derivative on $T$.
	Next, since $f\in C^1(\bar \O)$, by using \eqref{munu}, we get 
	\begin{eqnarray}\label{xeq3}
		\int_{\Gamma} \<\n^T f, \bar \nu\> ds&=&\int_{\Gamma} \<\n^T f, \frac{1}{\cos\theta} \mu\> ds\nonumber
		\\&=&\frac{1}{\cos\theta} \int_{\Gamma}\left<\bar\nabla f-\left<\bar\nabla f,\bar N\right>\bar N,  \mu\right>ds\nonumber
		\\&=&\frac{1}{\cos\theta} \int_{\Gamma}  \bar\nabla_\mu f -  \sin\theta \bar\nabla_{\bar N} fds\nonumber
		\\&=&-c \tan\theta|\Gamma|.
	\end{eqnarray}
	In the last equality we used $f=0$ on $\S$ which implies $\bar\nabla_\mu f=0$ along $\Gamma$, and $\bar\nabla_{\bar N} f=c$ on $T$.
	
	Combining \eqref{xeq1} - \eqref{xeq3}, we arrive at
	\begin{eqnarray}\label{xeq4}
		\frac{n}{n+1}|\O|\ge\int_{\S} Hf_\nu^2 dA -c^2\tan\theta |\Gamma|.
	\end{eqnarray}
	
	On the other hand, by using the divergence theorem \cite[Lemma 2.1]{PT20}, 
	\begin{eqnarray}\label{xeq5}
		|\O|=\int_\O \bar \Delta f dx =\int_\S f_\nu dA+\int_T f_{\bar N}dA=\int_\S f_\nu dA+c|T|.	\end{eqnarray}
	By the choice of $c$ in \eqref{c1}, we see that
	\begin{eqnarray}\label{xeq6}
		\int_{\S} f_\nu dA=  |\O|{+}\frac{n}{n+1}\cot\theta\frac{|T|^2}{|\Gamma|} \ge\frac{n+1}{n}\int_{\S} Hf_\nu^2 dA.
	\end{eqnarray}
	Using the H\"older inequality $$ \left(\int_{\S} f_\nu dA\right)^2\le \int_{\S} Hf_\nu^2 dA\int_{\S} \frac{1}{H} dA,$$ in \eqref{xeq6}, we obtain
	\begin{eqnarray*}
		\int_{\S} \frac{1}{H} dA\ge \frac{n}{n+1} |\O|+\cot\theta\frac{|T|^2}{|\Gamma|}.
	\end{eqnarray*}
	This is exactly \eqref{EQ-HK-halfspace2}. Inequality \eqref{EQ-HK-halfspace} is equivalent to \eqref{EQ-HK-halfspace2} in view of \eqref{Conservation-halfspace} and \eqref{Balancing-halfspace}.
	
	We are thus left to characterize the equality case in \eqref{EQ-HK-halfspace} or \eqref{EQ-HK-halfspace2}.
	Indeed, if equalities hold throughout the argument above, from the Cauchy-Schwarz inequality for $\vert\bar\nabla^2 f\vert^2$ we know that $\bar\nabla^2f$ is proportional to the metric $g_{\rm euc}$, i.e., 
	\begin{align}\label{proportional}
		\bar\nabla^2f=\frac{1}{n+1}g_{\rm euc}.
	\end{align}
	 Restricting \eqref{proportional} at $x\in\Sigma$ and using $f\mid_{\Sigma}=0$, we see
	\begin{align}\label{g_ijandh_ij}
		h_{ij}(x)f_{\nu}(x)=\frac{1}{n+1}g_{ij}
	\end{align}
which implies that $\Sigma$ is an umbilical hypersurface in $\mathbb{R}^{n+1}_+$. In particualr, it is well-known that umbilical hypersurfaces in the Euclidean space must be spherical, which completes the proof.
	
\end{proof}

Finally, with the Heintze-Karcher inequality \eqref{EQ-HK-halfspace} in force, we are in the position to prove the Alexandrov type theorem.

\begin{proof}[{\bf Proof of \cref{Theorem2-halfspace}}]
	Since $\Sigma$ is now a CMC $\theta$-capillary hypersurface, the Heintze-Karcher type inequality \eqref{EQ-HK-halfspace} reads
		\begin{align}\label{eq-Corollary1-0}
		\int_\S\frac{1}{H} dA\ge \frac{n+1}{n} |\O|+\frac{\cos\theta}{H}\left(\int_\S \<\nu, E_{n+1}\>dA\right).
	\end{align}
On the other hand, since $H$ is a positive constant on $\Sigma$, the Minkowski type formula \eqref{Minkowski-Poly} shows that \eqref{eq-Corollary1-0} is indeed an equality. By the characterization of equality case in \cref{Thm-HK-halfspace}, $\Sigma$ must be a $\theta$-capillary spherical cap. This completes the proof.

\end{proof}

	\section{The case of half ball}\label{Sec5}
	In this section we let $\O\subset\rr_+^{n+1}$. In this case, $\bar N(x)=x$. We collect the following integral identities, which have been proved in \cite[Proposition 3.2]{WX19}. For the sake of simplicity, in this section, the smooth function $x_{n+1}$ and the vector field $X_{n+1}$ in \eqref{Xconf} are abbreviated by $V$ and $X$. Recall that $X$ is a conformal Killing vector field with $$\frac12(\bar \n_iX_j+\bar \n_j X_i)=V\delta_{ij}.$$
		\begin{proposition}\label{PropMinkowski-Ball} It holds that
		\begin{align}
			&\int_{\S}\left<\nu, E_{n+1}\right>dA=-\int_TVdA, \label{Conservation-Ball}\\
			&\int_{\S}H \left<\nu, E_{n+1}\right>dA=-\int_\Gamma\left<\mu,E_{n+1}\right>ds,\label{Balancing-Ball}\\
			& \int_\S n(x_{n+1}+\cos\theta\<\nu,E_{n+1}\>) -H\<X,\nu\>dA=0.\label{Minkowski-Ball}
		\end{align} 	
	\end{proposition}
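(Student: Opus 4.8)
The plan is to prove the three identities \eqref{Conservation-Ball}--\eqref{Minkowski-Ball} by the same mechanism as in the proof of \cref{PropMinkowski-Poly}: one applies the divergence theorem in $\O$ and the tangential divergence theorem on $\S$ to suitably chosen vector fields. The only structural change from the half-space case is that here $\bar N(x)=x$ along $T\subset\SS^n$, so the boundary contributions on $T$ acquire a weight $x_{n+1}$ and the contributions on $\Gamma$ have to be re-evaluated.

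For \eqref{Conservation-Ball} I would apply the divergence theorem in $\O$ to the constant field $E_{n+1}$: since $\bar{\rm div}\,E_{n+1}=0$ and $\p\O=\S\cup T$ with outward unit normal $\nu$ on $\S$ and $\bar N=x$ on $T$, one gets $0=\int_\S\<\nu,E_{n+1}\>dA+\int_T\<x,E_{n+1}\>dA$, and $\<x,E_{n+1}\>=x_{n+1}=V$ on $T$ gives the claim. For \eqref{Balancing-Ball} I would use the tangential projection $E_{n+1}^T$ of $E_{n+1}$ onto $T\S$; exactly as in the derivation of \eqref{Balancing-halfspace} one has ${\rm div}_\S(E_{n+1}^T)=-H\<\nu,E_{n+1}\>$, and integrating over $\S$ together with $\<E_{n+1}^T,\mu\>=\<E_{n+1},\mu\>$ yields $-\int_\S H\<\nu,E_{n+1}\>dA=\int_\Gamma\<\mu,E_{n+1}\>ds$. (Note that this second identity does not even use the capillarity condition.)

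The substantive identity is \eqref{Minkowski-Ball}. Here I would use the conformal Killing field $X=X_{n+1}$ and its tangential part $X^T=X-\<X,\nu\>\nu$ on $\S$. From $\tfrac12(\bar\n_iX_j+\bar\n_jX_i)=V\delta_{ij}$ one computes, by a calculation analogous to the one for $x^T$ in \cref{PropMinkowski-Poly} (the factor $1$ being replaced by $V$), that ${\rm div}_\S(X^T)=nV-H\<X,\nu\>$; integrating over $\S$ gives
\begin{align*}
	\int_\S\big(nV-H\<X,\nu\>\big)dA=\int_\Gamma\<X,\mu\>ds .
\end{align*}
It then remains to identify the right-hand side with $-n\cos\theta\int_\S\<\nu,E_{n+1}\>dA$. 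Along $\Gamma$ one has $|x|=1$, hence $X=x_{n+1}x-E_{n+1}$ there; moreover $\bar N=x$, and since $\bar\nu$ is tangent to $T\subset\SS^n$ we get $\<x,\bar\nu\>=0$, so \eqref{munu} gives $\<x,\mu\>=\sin\theta$ and $\<x,\nu\>=-\cos\theta$ on $\Gamma$. A short computation with \eqref{munu} then yields $\<X,\mu\>=x_{n+1}\sin\theta-\<E_{n+1},\mu\>=-\cos\theta\<\bar\nu,E_{n+1}\>$ on $\Gamma$. Finally, taking the inner product of the structural identity \eqref{AS16-(2.3)} with $E_{n+1}$, inserting $\<x,\mu\>=\sin\theta$, $\<x,\nu\>=-\cos\theta$, and using $\sin\theta\,\nu+\cos\theta\,\mu=\bar\nu$ (a direct consequence of \eqref{munu}), one obtains $n\int_\S\<\nu,E_{n+1}\>dA=\int_\Gamma\<\bar\nu,E_{n+1}\>ds$. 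Combining the last three displayed facts gives \eqref{Minkowski-Ball}.

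I expect the only point requiring care to be the boundary bookkeeping in \eqref{Minkowski-Ball}: correctly using $|x|=1$ and $\<x,\bar\nu\>=0$ on $\Gamma$ to reduce $X|_\Gamma$ and the cross terms, and keeping the factor $n$ in \eqref{AS16-(2.3)} when converting the $\Gamma$-integral of $\<\bar\nu,E_{n+1}\>$ into the $\S$-integral of $\<\nu,E_{n+1}\>$. There is no analytic difficulty, since $X$ and $E_{n+1}$ are smooth and $\O$ has piecewise smooth (Lipschitz) boundary, so all the divergence theorems invoked are elementary (the cornered version being \cite[Lemma 2.1]{PT20} if one insists on it, cf.\ also \cite{AS16, WX19}).
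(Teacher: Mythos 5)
Your proposal is correct and follows essentially the same route as the paper: divergence theorem in $\O$ for $E_{n+1}$, tangential divergence of $E_{n+1}^T$ and of $X^T$ on $\S$, and then the conversion of the $\Gamma$-integral via \eqref{AS16-(2.3)} and \eqref{munu} using $\<x,\mu\>=\sin\theta$, $\<x,\nu\>=-\cos\theta$, $\<x,\bar\nu\>=0$ and $X|_\Gamma=x_{n+1}x-E_{n+1}$. The only cosmetic difference is that you compute $\<X,\mu\>=-\cos\theta\<E_{n+1},\bar\nu\>$ directly, while the paper arrives at the same relation by first noting $\<E_{n+1},\bar\nu\>=-\<X,\bar\nu\>$ and $\<X,\bar N\>=0$ on $\SS^n$; the two computations are equivalent.
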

	\begin{proof} We prove it for completeness.
		Since $$\bar{\rm div} E_{n+1}=0,$$ by using integration by parts in $\O$, we get \eqref{Conservation-Ball}.
		Since $${\rm div}(E_{n+1}^T)=-H\left<\nu, E_{n+1}\right>,$$ by using integration by parts in $\S$ and \eqref{munu}, we get \eqref{Balancing-Ball}.
		Since $${\rm div}(X^T)=nV-H\left<X, \nu\right>,$$ by using integration by parts in $\S$, we get
		\begin{align}\label{xxeq1-b}
			\int_\S nV -H\<X,\nu\>dA=\int_{\Gamma}\<X, \mu\> ds.
		\end{align} 
	On the other hand, by \eqref{AS16-(2.3)} and \eqref{munu}, and the fact $\bar N(x)=x$, we obtain
	\begin{align}\label{xxeq2-b}
	    n\int_\Sigma\left<\nu,E_{n+1}\right>dA
	    =&\int_\Gamma\left(\left<x,\mu\right>\left<E_{n+1},\nu\right>-\left<x,\nu\right>\left<E_{n+1},\mu\right>\right)ds\notag\\
	    =&\int_\Gamma\left<E_{n+1},\bar\nu\right>ds=-\int_\Gamma\left<X,\bar\nu\right>ds=-\frac{1}{\cos\theta}\int_\Gamma\left<X,\mu\right>ds.
	\end{align}
		Here we invoke the definition of $X$ in the third equality; the last equality is derived again by \eqref{munu}. 
		
	
		Identity \eqref{Minkowski-Ball} follows from \eqref{xxeq1-b} and \eqref{xxeq2-b}.
	\end{proof}

\begin{proof}[{\bf Proof of \cref{Thm-HK-halfball}}]
	 Let $f$ be the solution to \eqref{MixedBdryPb-halfball2} with 
	\begin{eqnarray}\label{c2}
		c=-\frac{n}{n+1}\cos\theta\frac{  \int_TVdA}{\int_\Gamma\left<E_{n+1},\mu\right>ds}.
	\end{eqnarray}
Arguing as the proof of \cref{Thm-HK-halfspace}, we find that 
\begin{align*}
	\vert\bar{\nabla}^2f\vert\in L^2(\Omega),\quad\vert\nabla^2f\vert\in L^1(T).
\end{align*}
Using generalized Reilly's formula \eqref{qx} for $f$ and we obtain
\begin{align}\label{eq-Thm1.2-1}
	&\frac{n}{n+1}\int_\Omega Vdx
	=\frac{n}{n+1}\int_\Omega V\left(\bar{\De}f\right)^2dx\notag\\
	\geq&\int_\Sigma HVf_\nu^2dA
	+c\int_TV\left(\De f-\frac{\De V}{V}f\right)ds
	+nc^2\int_TVds\notag\\
	=&\int_\Sigma HVf_\nu^2dA
	+c\int_T\left(V\De f-\De Vf\right)ds
	-nc^2\int_\Sigma \left<E_{n+1},\nu\right>ds.
\end{align}
Here in the inequality, we used the the boundary condition $f=0$ on $\S$ and $V{\bar\n_{\bar N}} f-f\bar\n_{\bar N} V=cV$ on $T$, as well as the fact that the mean curvature of the half sphere is a constant, i.e., $H^T\equiv n$; for the last equality, we used \eqref{Conservation-Ball}.

Let us keep track of the second term, since $$\vert\nabla^2f\vert\in L^1(T),$$ by using the Green's second identity, we find
\begin{align*}
	c\int_T\left(V\De f-\De Vf\right)ds&=c\int_\Gamma V\left<\nabla f,\overline{\nu}\right>ds\\&=\frac{c}{\cos\theta}\int_\Gamma V\left<\bar\nabla f-\left<\bar\nabla f,\overline{N}\right>\overline{N},\mu\right>ds,
\end{align*}
here we used $f=0$ on $\Sigma$ in the first equality; for the last equality, we use the constant contact angle condition \eqref{munu}.

Notice that $f\in C^{1,\alpha}(\Omega)$, the Dirichlet condition and the Robin condition in \eqref{MixedBdryPb-halfball2} yield
\begin{align}\label{eq-Thm1.2-2}
	\int_\Gamma V\left<\bar\nabla f-\left<\bar\nabla f,x\right>x,\mu\right>ds=&-\int_\Gamma V\bar\nabla_{\bar N}f\left<x,\mu\right>ds\notag\\
	=&-c\int_\Gamma V\left<x,\mu\right>ds=-c\int_\Gamma\left<X+E_{n+1},\mu\right>ds,
\end{align}
here in the second equality we have used the fact that $\Gamma\subset\Sigma$ and hence $f=0$ on $\Gamma$; in the last equality we exploited that $\Gamma\subset T\subset\p\mathbb{B}^{n+1}$ and hence $X(x)=Vx-E_{n+1}$. 

By virtue of  \eqref{eq-Thm1.2-2}, \eqref{Balancing-Ball} and \eqref{xxeq2-b}, \eqref{eq-Thm1.2-1} thus reads
\begin{align}
	\frac{n}{n+1}\int_\Omega Vdx
	\geq\int_\Sigma HVf_\nu^2dA-\frac{c^2}{\cos\theta}\int_\Gamma \left<\mu,E_{n+1}\right>dA.
\end{align} 
On the other hand, by using the divergence theorem \cite[Lemma 2.1]{PT20},
\begin{align}
	\int_\Omega Vdx
	=\int_\Omega \left(V\bar\De f-\bar\De Vf\right)dx
	=&\int_\Sigma Vf_\nu dA+c\int_TVdA.
\end{align}

Invoking the choice of $c$ in \eqref{c2}, we find
\begin{align}\label{eq-Thm1.2-4}
	\int_\Sigma Vf_\nu dA
	=\int_\Omega Vdx+\frac{n}{n+1}\frac{\cos\theta\left(\int_T VdA\right)^2}{\int_\Gamma\left<E_{n+1},\mu\right>ds}
	\geq\frac{n+1}{n}\int_\Sigma HVf_\nu^2dA.
\end{align}
Using the H\"older inequality
\begin{align}
	\left(\int_\Sigma Vf_\nu dA\right)^2\leq\int_\Sigma HVf_\nu^2dA\int_\Sigma\frac{V}{H}dA,
\end{align}
in \eqref{eq-Thm1.2-4}, we obtain
\begin{align}
	\int_\Sigma\frac{V}{H}dA
	\geq\frac{n+1}{n}\int_\Omega Vdx+\cos\theta\frac{\left(\int_T VdA\right)^2}{\int_\Gamma\left<E_{n+1},\mu\right>ds}.
\end{align}
This shows \eqref{EQ-HK-halfball2}. Inequality \eqref{EQ-HK-halfball} is equivalent to \eqref{EQ-HK-halfball2} in view of \eqref{Conservation-Ball} and \eqref{Balancing-Ball}.

The characterization of equality case in \eqref{EQ-HK-halfball} or \eqref{EQ-HK-halfball2} follows from the proof of \cref{Thm-HK-halfspace}, this completes the proof.
\end{proof}

Consequently, the Alexandrov type result \cref{Theorem2-ball} is set up. Thanks to the CMC condition, the Minkowski type formula \eqref{Minkowski-Ball} shows that the Heintze-Karcher type inequality \eqref{EQ-HK-halfball} must be an equality, and it follows from \cref{Thm-HK-halfball} that $\Sigma$ must be a $\theta$-capillary spherical cap.

\appendix
\section{A Fredholm alternative for the mixed boundary value problem}\label{Appendix}
{\color{black}The purpose of the appendix is to present a detailed statement and proof of the Fredholm alternative for mixed boundary elliptic equation, which was brought up in \cite{Liebermann86} without proof.}

 {\color{black}For completeness}, we present some existence and regularity results for mixed boundary value problems
 \begin{align}\label{MBVP}
		\begin{cases}
			\bar\Delta f=h \quad&\text{in }\O,\\
			f=0 \quad&\text{on }\S,\\
			\bar\n_{\bar N} f+ f=g &\text{on }{\rm int}(T),	\end{cases}	
	\end{align}
  which was proved by Lieberman \cite{Liebermann86,Liebermann89}.
  
  To facilitate the presentation, We recall the definition of weighted H\"older spaces. Set $d_\Gamma(x)=dist(x,\Gamma)$, $\Omega_\delta=\{x\in\Omega:d_\Gamma(x)>\delta\}$. For $a\geq 0$, $b\geq-a$, we define 
\begin{align*}
    {|f|}_a^{(b)}=\sup_{\delta>0}\delta^{a+b}{|f|}_{a;\Omega_\delta},
\end{align*}
where $|f|_{a;\Omega_\delta}$ is the standard norm on $\Omega_{\delta}$. We denote by $H_a^{(b)}$ the set of all functions $f$ on $\Omega$ with finite norm $|f|_a^{(b)}$.

\begin{theorem}\label{ThmA1}
There exists a solution $f\in C^{2}( \O\cup{\rm int} (T))\cap C^0(\bar \O)$ to \eqref{MBVP} for all $h\in C^{\alpha}(\O\cup {\rm int} (T))$, $g\in C^{1,\alpha}(\O\cup {\rm int} (T))$. 
    
\end{theorem}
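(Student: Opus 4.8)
The plan is to obtain the solution of the general mixed boundary value problem \eqref{MBVP} by Perron's method, following Lieberman \cite{Liebermann86}, so the main work is to produce barriers at the corner set $\Gamma$ and to invoke interior plus lateral (near $\mathrm{int}(T)$) elliptic regularity away from $\Gamma$. First I would reduce to the case of continuous boundary data: since $h\in C^\alpha(\Omega\cup\mathrm{int}(T))$, a Newtonian-type potential $w$ with $\bar\Delta w=h$ can be subtracted off, reducing \eqref{MBVP} to a problem with $\bar\Delta f=0$ in $\Omega$, a (modified) continuous Dirichlet datum on $\Sigma$, and a (modified) oblique/Robin datum $\bar\n_{\bar N}f+f=\tilde g$ on $\mathrm{int}(T)$ with $\tilde g$ of class $C^{1,\alpha}$. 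For such a homogeneous problem one defines the Perron family of subsolutions — functions in $C^0(\bar\Omega)$ that are subharmonic in $\Omega$, lie below the Dirichlet data on $\Sigma$, and satisfy the Robin condition in the appropriate viscosity/variational sense along $\mathrm{int}(T)$ — and takes $f$ to be the pointwise supremum.

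The key steps, in order, are: (i) verify the comparison principle for \eqref{MBVP}, which gives uniqueness and shows the Perron family is nonempty and bounded (constants of suitable sign serve as sub/supersolutions, using the sign structure of the Robin condition); (ii) show the Perron envelope $f$ is harmonic in $\Omega$ by the standard harmonic-lifting argument on balls $B\subset\Omega$; (iii) show $f$ attains the Dirichlet data continuously at points of $\Sigma\setminus\Gamma$ using ordinary exterior-sphere barriers, and that $f$ satisfies the Robin condition at points of $\mathrm{int}(T)$ using the known solvability/regularity of the oblique derivative problem in a half-ball model chart (here $\mathrm{int}(T)$ is smooth and away from $\Gamma$, so classical Schauder theory for the oblique problem applies and gives $f\in C^{2}(\Omega\cup\mathrm{int}(T))$); (iv) establish continuity of $f$ up to $\Gamma$ by constructing an explicit barrier adapted to the wedge of opening angle $\theta$ — in the cylindrical coordinates $(r,\eta,x')$ near $p\in\Gamma$ used in \cref{Sec4}, a function of the form $A r^{\gamma}\varphi(\eta)$ with $\gamma\in(0,1)$ chosen small and $\varphi$ a positive solution of $\varphi''+\gamma^2\varphi=0$ on $[0,\theta]$ matching the Dirichlet condition at $\eta=0$ and the homogeneous oblique condition at $\eta=\theta$ is harmonic, vanishes at $\Gamma$, and dominates the relevant data, hence serves as a corner barrier; this forces $f\in C^0(\bar\Omega)$ and pins down the boundary behaviour at $\Gamma$.

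The main obstacle is step (iv): the corner $\Gamma$ is where Dirichlet and Robin conditions meet, and the admissible barrier exponents $\gamma$ are constrained by the opening angle $\theta$ — this is exactly the place where the hypothesis $\theta\in(0,\tfrac{\pi}{2})$ (or more precisely the relation between $\theta$ and the first eigenvalue of the mixed Dirichlet–Robin problem on the arc $[0,\theta]$) enters, and it is what limits the regularity to $C^0(\bar\Omega)$ rather than something better. Once the barrier is in hand, adding back the potential $w$ recovers a solution of \eqref{MBVP} with the stated regularity $f\in C^2(\Omega\cup\mathrm{int}(T))\cap C^0(\bar\Omega)$. I would remark that the sharper statements $f\in C^{1,\alpha}(\bar\Omega)$ and the Hessian decay $|\bar\n^2 f|\le C d_\Gamma^{-\beta}$, needed for the Reilly-formula application, are then a separate matter handled by Lieberman's weighted Schauder estimates in the spaces $H_a^{(b)}$ recalled above, which is the content of the subsequent results (\cref{regularity thm}, \eqref{c0 estimate}) rather than of this existence theorem.
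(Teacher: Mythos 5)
The paper offers no argument of its own here---its entire proof of \cref{ThmA1} is the citation ``See \cite[Theorem 1]{Liebermann86}''---and your Perron-method outline (potential-theoretic reduction, comparison principle, harmonic lifting, exterior-sphere barriers on $\S\setminus\Gamma$, Schauder theory for the oblique condition on ${\rm int}(T)$, and a corner barrier $Ar^{\gamma}\varphi(\eta)$ at $\Gamma$) is exactly the strategy of that cited theorem, as the paper's introduction itself confirms (``shown by Lieberman \ldots by using Perron's method''). One correction: the hypothesis $\theta\in(0,\tfrac{\pi}{2})$ is \emph{not} where you place it---\cref{ThmA1} carries no angle restriction, and continuity at $\Gamma$ in your step (iv) only requires a corner barrier with \emph{some} small exponent $\gamma>0$, which exists for every opening angle $\theta\in(0,\pi)$ because the eigenvalue constraint is merely $\gamma<\tfrac{\pi}{2\theta}$. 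The restriction $\theta<\tfrac{\pi}{2}$ enters only in the subsequent weighted estimate of \cref{regularity thm}, where one needs the exponent $\lambda$ to exceed $1$ in order to reach $C^{1,\alpha}(\bar\O)$ and the Hessian decay $|\bar\n^2 f|\le Cd_\Gamma^{-\beta}$ with $\beta<1$---exactly as your own closing remark says, so that remark and your step (iv) are slightly at odds and you should resolve them in favor of the former.
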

\begin{proof}
See \cite[Theorem 1]{Liebermann86}.
\end{proof}
\begin{theorem}\label{regularity thm}
Assume $\theta\in (0,\frac{\pi}{2})$. Let $f\in C^{2}( \O\cup{\rm int} (T))\cap C(\bar \O)$ be a solution to \eqref{MBVP}. Then for any $\lambda\in \left(1,\frac \pi {2\theta}\right)$ and any noninteger $a>2$, we have
\begin{align}\label{regularity ineq}
 |f|_a^{(-\lambda)}\leq C(|h|_{a-2}^{(2-\lambda)}+|g|_{a-1}^{(1-\lambda)}+|f|_0).  
\end{align}
\end{theorem}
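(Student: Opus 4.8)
The plan is to establish the estimate \eqref{regularity ineq} by a standard interior-and-boundary Schauder argument adapted to the weighted H\"older spaces $H_a^{(b)}$, localizing near the corner $\Gamma$ where the Dirichlet and Robin conditions meet. First I would reduce to a model situation: fix a point $p \in \Gamma$ and, by the coordinate straightening used in the proof of \cref{Thm-HK-halfspace}, pass to cylindrical coordinates $(r,\eta,x')$ with $r \in [0,\delta)$, $\eta \in [0,\theta]$, so that $\Sigma$ becomes $\{\eta = \theta\}$ (Dirichlet) and $T$ becomes $\{\eta = 0\}$ (Robin), modulo lower-order perturbations. The key quantitative input is the explicit spectral threshold for the two-dimensional wedge of opening angle $\theta$ with a Dirichlet side and a Neumann/Robin side: harmonic functions homogeneous of degree $\kappa$ compatible with these mixed conditions occur precisely for $\kappa = \frac{\pi}{2\theta}(2k-1)$, $k \in \mathbb{Z}$, so the first positive exponent is $\frac{\pi}{2\theta} > 1$ when $\theta < \frac{\pi}{2}$. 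This is exactly why the hypothesis $\lambda \in (1, \frac{\pi}{2\theta})$ appears: one wants $\lambda$ strictly below the first homogeneity exponent so that solutions of the homogeneous model problem that are $O(r^{\lambda})$ are forced to vanish, which yields the scaling-invariant a priori bound.

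Next I would run the dyadic rescaling argument. For $\delta > 0$ set $\Omega_\delta = \{d_\Gamma > \delta\}$ and, on each dyadic annular region $\{d_\Gamma \sim 2^{-j}\}$, rescale by the factor $2^{j}$ to a region of unit size sitting at unit distance from a straightened corner; on this unit region apply the classical Schauder estimates for the oblique-derivative/Dirichlet mixed problem away from the corner (interior estimates plus boundary Schauder on the two smooth faces), which give control of $|f|_{a}$ on the annulus by $|h|_{a-2}$, $|g|_{a-1}$, and the sup norm of $f$ on a slightly larger annulus, with the correct powers of $2^{-j}$ keeping track of the homogeneity — these powers are precisely encoded by the weights $a+b$ in the definition of $|\cdot|_a^{(b)}$, so that $|f|_a^{(-\lambda)}$, $|h|_{a-2}^{(2-\lambda)}$, $|g|_{a-1}^{(1-\lambda)}$ all scale consistently. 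Summing the rescaled estimates over $j$ and absorbing, one obtains \eqref{regularity ineq} with the $|f|_0$ term on the right coming from the sup-norm control; the requirement that $a$ be a noninteger is the usual Schauder caveat so that $C^{a}$ means $C^{\lfloor a\rfloor, a - \lfloor a\rfloor}$.

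The main obstacle, and the step requiring genuine care, is the passage from the rescaled Schauder estimates on individual annuli to the global weighted bound: one must show there is \emph{no} loss in the constant as $j \to \infty$, i.e. the local estimates are uniform up to the corner. The standard way to see this is a contradiction/compactness argument — if the constants blew up, one would extract (after rescaling and normalizing) a nonzero solution of the homogeneous constant-coefficient model problem on the infinite wedge that is $O(r^{\lambda})$ at the vertex and has controlled growth at infinity; by the spectral description above and the strict inequality $\lambda < \frac{\pi}{2\theta}$, such a solution must be identically zero, a contradiction. This is where Lieberman's work \cite{Liebermann89} is invoked; I would either cite it directly for the model wedge estimate or reproduce the short separation-of-variables computation identifying the admissible homogeneity exponents. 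Once uniformity is in hand, summing the geometric series in $j$ and using $\lambda > 1$ (so the lower-order/error terms from the coordinate straightening are genuinely subcritical) closes the argument.
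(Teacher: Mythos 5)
Your outline is a legitimate route to the estimate, but it is genuinely different from what the paper does: the paper does not reprove the weighted Schauder bound at all --- it cites \cite[Theorem 4]{Liebermann89} wholesale, and the only content of its proof of \cref{regularity thm} is an explanation of why the range $\lambda\in\left(1,\frac{\pi}{2\theta}\right)$ is admissible. Lieberman's method (and hence the paper's) is barrier-based: on the straightened wedge $r\in[0,\delta)$, $\eta\in[0,\theta]$ one builds a Miller-type supersolution $\psi=r^{\lambda}\varphi(\eta)$ with $\varphi$ a perturbation of $\cos(\lambda\eta)$, and the requirements $\bar\Delta\psi\le -c_1r^{\lambda-2}$, $\bar\n_{\bar N}\psi\ge c_1r^{\lambda-1}$ on $\{\eta=0\}$ and $c_1\le\varphi\le 1$ on $[0,\theta]$ force $\lambda\theta<\frac{\pi}{2}$; the maximum principle then yields the pointwise decay of $f$ near $\Gamma$, which local Schauder theory upgrades to the full $H_a^{(-\lambda)}$ bound. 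Your proposal replaces the barrier by a dyadic rescaling plus a blow-up/compactness argument and a Liouville classification on the infinite wedge; the spectral threshold $\frac{\pi}{2\theta}$ you extract by separation of variables is exactly the same obstruction that makes $\cos(\lambda\eta)$ lose positivity on $[0,\theta]$, so the two arguments rest on the same model computation. The barrier buys a short, quantitative proof with an explicit constant; your compactness route buys flexibility (it adapts more readily to variable coefficients and perturbed domains and avoids constructing $\varphi$ explicitly) at the price of a nonconstructive constant. One structural caveat in your write-up: the dyadic summation only closes if the sup of $f$ on the annulus $\{d_\Gamma\sim 2^{-j}\}$ already carries the weight $2^{-j\lambda}$, i.e.\ you need the pointwise decay $|f|\le Cd_\Gamma^{\lambda}(\cdots)$ \emph{before} summing rather than as a by-product of it; as sketched, that step is folded into the assertion of ``no loss in the constant as $j\to\infty$,'' which is where the real content lives. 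The clean ordering is to first prove the decay (by your Liouville step or by the barrier) and only then run the scaled annulus estimates --- not a fatal gap, but the one place the argument must be rearranged to be airtight.
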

\begin{proof}
 See \cite[Theorem 4]{Liebermann89}. Here we only explain the admissible range $\left(1,\frac \pi {2\theta}\right)$ for $\lambda$, which is not explicitly expressed in \cite[Theorem 4]{Liebermann89}. 
 
 Locally near every $x_0\in\Gamma$, up to a transformation, there exists a Cartesian coordinate $(x^1,\ldots,x^{n+1})$, centered at $x_0$, such that the {\color{black}corresponding} cylindrical coordinates $(r,\eta,x')$ is given by 
 $$ x_1=r\cos\eta,\quad  x_2=r\sin\eta,\quad x'=(x^3,\ldots,x^{n+1}),$$
 with $0\leq \eta\leq \theta$.\footnote{The coordinate representation follows from the definition of the wedge condition by Lieberman in \cite{Liebermann86}, which is clearly satisfied by domains that we consider.}
 
 The key of the proof in \cite[Theorem 4]{Liebermann89} is to find a Miller-type barrier function $\psi=r^\lambda \varphi (\eta)$ satisfying  
 \begin{align}\label{miller}
     \begin{cases}
         \bar\Delta \psi=r^{\lambda-2}(\lambda^2\varphi+\varphi_{\eta \eta})\leq-c_1r^{\lambda-2}\quad&\text{for }\eta\in (0,\theta),\\
			\bar\n_{\bar N} \psi=-r^{\lambda-1}\varphi_{\eta }\geq c_1 r^{\lambda-1} &\text{for } \eta=0,\\
			c_1\leq \varphi\leq 1 &\text{for } \eta\in [0,\theta],
     \end{cases}
 \end{align}
 with a positive constant $c_1$. The Miller-type barrier $\varphi(\eta)$ is constructed from a perturbation of $\tilde{\varphi}(\eta)=\cos(\lambda\eta)$, which has a positive lower bound in $[0, \theta]$ and satisfies \eqref{miller} only if $\theta\in (0,\frac{\pi}{2\lambda})$. Hence to ensure $\lambda>1$, one needs to restrict $\theta\in (0,\frac{\pi}{2})$.
 
\end{proof}
\begin{lemma}
If $f\in C^{2}( \O\cup{\rm int} (T))\cap C(\bar \O)$ is a solution to \eqref{MBVP}. Then there exists a {\color{black}positive }constant $C$ such that
\begin{align}\label{c0 estimate}
 |f|_0\leq C(|h|_0+|g|_0).   
\end{align}
\begin{proof}
 Set $C_1=|f|_0+|g|_0$. For the case $B=\bar \rr^{n+1}_+$, assume without loss of generality that $\Omega\subset\left\{0<x_{n+1}<R\right\}$ for some $R$ large, consider the functions $v_1(x)=C_1(e^{x_{n+1}}-e^R)$. A direct computation then yields, 
	\begin{align}\label{regularity ineq}
    \begin{cases}
	        \bar\Delta v_1\geq C_1 \quad&\text{in }\O,\\
			v_1\leq0 \quad&\text{on }\S,\\
			\bar\n_{\bar N} v_1+v_1\leq -C_1 &\text{on } {\rm int}(T).
	   \end{cases}
	\end{align}
	Applying the maximum principle (see for example \cite[Lemma 4.1]{Tang13}) to $f+v_1$ and $-f+v_1$, we obtain \eqref{c0 estimate}.
	
	For the case $B=\bar{\mathbb{B}}^{n+1}$, 
	we choose $v_2=C_1(|x|^2-4)$ to replace $v_1$, and the process follows similarly.
	
\end{proof}
\end{lemma}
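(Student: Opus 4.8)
The estimate is a classical $C^{0}$ a priori bound, and the plan is to obtain it by the barrier method combined with the maximum principle for the mixed Dirichlet--Robin problem, which I would quote from \cite[Lemma 4.1]{Tang13} (it applies to functions that are only $C^{2}$ in $\O\cup{\rm int}(T)$ and continuous up to $\bar\O$, corner $\Gamma$ included). Write $C_{1}=|h|_{0}+|g|_{0}$. In each of the two geometries the plan is to exhibit an explicit function $v\le 0$ on $\bar\O$ that is a supersolution of \eqref{MBVP} with data dominated by $C_{1}$, so that $w^{\pm}:=\pm f+v$ satisfies $\bar\Delta w^{\pm}\ge 0$ in $\O$, $w^{\pm}\le 0$ on $\S$, and $\bar\n_{\bar N}w^{\pm}+w^{\pm}\le 0$ on ${\rm int}(T)$; the maximum principle then forces $w^{\pm}\le 0$ on $\bar\O$, i.e.\ $|f|\le -v$, which gives \eqref{c0 estimate}.

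For $B=\bar\rr^{n+1}_{+}$, since $\O$ is bounded I may assume $\O\subset\{0<x_{n+1}<R\}$ and take $v(x)=C_{1}(e^{x_{n+1}}-e^{R})$. The three checks are immediate: $v\le 0$ on $\bar\O$; $\bar\Delta v=C_{1}e^{x_{n+1}}\ge C_{1}\ge|h|_{0}$ in $\O$; and, using $\bar N=-E_{n+1}$ together with $x_{n+1}=0$ on $T$, $\bar\n_{\bar N}v+v=-C_{1}e^{R}\le-C_{1}\le-|g|_{0}$ on ${\rm int}(T)$. Hence $|f|\le -v\le C_{1}e^{R}$, giving the estimate with $C=e^{R}$. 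For $B=\bar{\mathbb{B}}^{n+1}$ the same scheme runs with $v(x)=C_{1}(|x|^{2}-4)$: on $\bar\O\subset\bar{\mathbb{B}}^{n+1}$ one has $v\le -3C_{1}<0$, $\bar\Delta v=2(n+1)C_{1}\ge C_{1}\ge|h|_{0}$, and, since here $\bar N(x)=x$ and $|x|=1$ on $T$, $\bar\n_{\bar N}v+v=2C_{1}-3C_{1}=-C_{1}\le-|g|_{0}$ on ${\rm int}(T)$; this yields $|f|\le -v\le 4C_{1}$. Note that when $C_{1}=0$ the same argument with $v\equiv 0$ recovers the uniqueness statement $f\equiv 0$ used in \cref{Sec3}.

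The one point that requires care is the applicability of the maximum principle across the corner $\Gamma$: $w^{\pm}$ is merely continuous there and solves the equation only on $\O\cup{\rm int}(T)$, so a smooth-boundary comparison principle is not directly available. I expect this to be the main (and essentially the only) obstacle, and I would dispose of it exactly as in \cite[Lemma 4.1]{Tang13}: the homogeneous Dirichlet condition on $\S$, together with the fact that $\Gamma$ has codimension two, suffices to propagate the sign of $w^{\pm}$ up to $\bar\O$ without demanding any regularity of $f$ at $\Gamma$ beyond continuity. Everything else reduces to the routine sign verifications recorded above.
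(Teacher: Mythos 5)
Your proof is correct and follows essentially the same route as the paper: the same barriers $v_1(x)=C_1(e^{x_{n+1}}-e^R)$ and $v_2(x)=C_1(|x|^2-4)$, the same sign verifications, and the same appeal to the mixed-boundary maximum principle of \cite[Lemma 4.1]{Tang13} applied to $\pm f+v$. You even set $C_1=|h|_0+|g|_0$, which is what the estimate requires and corrects the paper's apparent slip of writing $C_1=|f|_0+|g|_0$.
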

We can now derive a Fredholm alternative for the mixed boundary value problem.
\begin{theorem}\label{fredholm lemma}
Assume $\theta\in(0,\frac\pi 2  )$.  Let $\gamma$ be a nonnegative integer, $a$ be a noninteger greater than 2, $\lambda$ be defined as in  \cref{regularity thm}.  Then either $(a)$ the homogeneous problem
\begin{align}
		\begin{cases}
			\bar\Delta f=0 \quad&\text{in }\O,\\
			f=0 \quad&\text{on }\S,\\
			\bar\n_{\bar N} f-\gamma f=0 &\text{on }{\rm int}(T),	\end{cases}	
	\end{align}
has nontrivial solutions; or $(b)$ the homogeneous problem 
has only the trivial solution, in which case, for all $f\in H_{a-2}^{(2-\lambda)}$, $g\in H_{a-1}^{1-\lambda}$ the inhomogeneous problem 
\begin{align}\label{inhomog eq}
		\begin{cases}
			\bar\Delta f=h \quad&\text{in }\O,\\
			f=0 \quad&\text{on }\S,\\
			\bar\n_{\bar N} f-\gamma f=g &\text{on }{\rm int}(T),	\end{cases}	
	\end{align}
has a unique solution $f\in H_a^{(-\lambda)}$.
\end{theorem}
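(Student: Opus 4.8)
The plan is to deduce the Fredholm alternative \cref{fredholm lemma} from the a priori estimate \cref{regularity thm}, the $C^0$-bound \eqref{c0 estimate}, and the solvability result \cref{ThmA1} in the usual way: reformulate the mixed boundary value problem as an operator equation on the weighted H\"older space $H_a^{(-\lambda)}$, show that the relevant operator is a compact perturbation of an invertible one, and then invoke the classical Fredholm alternative for $\mathrm{id}+\text{compact}$.

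\textbf{Step 1: A reference isomorphism.} First I would use \cref{ThmA1} together with \cref{regularity thm} and \eqref{c0 estimate} to show that the \emph{inhomogeneous} problem with the harmless lower-order term, namely $\bar\Delta f = h$ in $\Omega$, $f=0$ on $\Sigma$, $\bar\nabla_{\bar N} f + f = g$ on $\mathrm{int}(T)$, defines a bounded linear bijection
\begin{align*}
	\mathcal{L}_0 \colon H_a^{(-\lambda)} \longrightarrow H_{a-2}^{(2-\lambda)}\times H_{a-1}^{(1-\lambda)},\qquad f\mapsto (\bar\Delta f,\ \bar\nabla_{\bar N} f + f).
\end{align*}
Existence of a solution comes from \cref{ThmA1}; that the solution lies in $H_a^{(-\lambda)}$ with the quantitative bound comes from \eqref{regularity ineq}; uniqueness (hence injectivity) comes from applying the maximum principle used to prove \eqref{c0 estimate} to the homogeneous problem with the $+f$ Robin term, which forces $f\equiv0$. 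By the open mapping theorem (or directly from \eqref{regularity ineq} plus \eqref{c0 estimate}), $\mathcal{L}_0^{-1}$ is bounded.

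\textbf{Step 2: Compact perturbation.} The operator associated to \eqref{inhomog eq} is $\mathcal{L}_\gamma f = (\bar\Delta f,\ \bar\nabla_{\bar N} f - \gamma f)$, which differs from $\mathcal{L}_0$ only in the boundary term: $\mathcal{L}_\gamma f = \mathcal{L}_0 f - (0,\ (\gamma+1)f|_T)$. So solving \eqref{inhomog eq} is equivalent to solving $(\mathrm{id} - K)f = \mathcal{L}_0^{-1}(h,g)$ in $H_a^{(-\lambda)}$, where $K = \mathcal{L}_0^{-1}\circ \big(f\mapsto (0,\ (\gamma+1)f|_T)\big)$. The point is that $K$ is \emph{compact} on $H_a^{(-\lambda)}$: the trace/restriction $f\mapsto f|_T$ maps $H_a^{(-\lambda)}$ into a H\"older space on $T$ that embeds compactly into $H_{a-1}^{(1-\lambda)}$ (a gain of one derivative is traded for compactness via Arzel\`a–Ascoli on the sets $\Omega_\delta$, using the uniform weighted bounds), and $\mathcal{L}_0^{-1}$ is bounded.

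\textbf{Step 3: Conclusion via classical Fredholm theory.} Apply the Riesz–Schauder theorem to $\mathrm{id}-K$ on the Banach space $H_a^{(-\lambda)}$. Either $\ker(\mathrm{id}-K)\neq\{0\}$, which unwinds to case (a): the homogeneous problem $\bar\Delta f=0$, $f=0$ on $\Sigma$, $\bar\nabla_{\bar N}f-\gamma f=0$ on $\mathrm{int}(T)$ has a nontrivial solution; or $\ker(\mathrm{id}-K)=\{0\}$, in which case $\mathrm{id}-K$ is invertible, giving for every $(h,g)\in H_{a-2}^{(2-\lambda)}\times H_{a-1}^{(1-\lambda)}$ a unique $f\in H_a^{(-\lambda)}$ solving \eqref{inhomog eq}; this is case (b). I expect the main obstacle to be Step 2 — namely, verifying carefully that the trace map into the weighted H\"older scale is compact, which requires exploiting the weighted norms $|\cdot|_a^{(b)}$ near the edge $\Gamma$ (where $T$ meets $\Sigma$) and a diagonal Arzel\`a–Ascoli argument on the exhaustion $\{\Omega_\delta\}$, rather than any estimate at the edge itself; the interior and the smooth part of $T$ are routine. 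Everything else is bookkeeping with the estimates already recorded in \cref{regularity thm} and \eqref{c0 estimate}.
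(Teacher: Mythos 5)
Your proposal follows essentially the same route as the paper's proof: the paper introduces the solution operator $Q$ for the auxiliary Robin problem $\bar\n_{\bar N} f + f = g$ (your $\mathcal{L}_0^{-1}$, justified by \cref{ThmA1}, \cref{regularity thm}, \eqref{c0 estimate} and the maximum principle), rewrites \eqref{inhomog eq} as $f - Pf = Q(h,g)$ with $Pf = Q(0,(1+\gamma)f)$ (your $\mathrm{id}-K$), proves compactness of $P$ via the a priori bound $|f_k|_a^{(-\lambda)}\le C(|g_k|_{a-1}^{(1-\lambda)}+|f_k|_0)$ and Arzel\`a--Ascoli, and then invokes the classical Fredholm alternative. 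The only cosmetic difference is that you place the compact operator on $H_a^{(-\lambda)}$ via the trace map, whereas the paper runs it on $H_{a-1}^{(1-\lambda)}$, exploiting the same one-derivative gain in the weighted scale.
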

\begin{remark}\label{remark-appendix}
\normalfont
By the monotonicity of norm we have $|f|_\lambda^{(-\lambda)}\leq |f|_a^{(-\lambda)}$.  Therefore
if there is a solution $f\in H_{a}^{(-\lambda)}$ to problem \eqref{inhomog eq},
then the solution $f$ is also in $C^{1,\alpha}(\bar\O)$ with $\alpha=\lambda-1$.
\end{remark}
\begin{proof}[Proof of \cref{fredholm lemma}]
{\color{black}We follow the classical proof of the Fredholm alternative for the Dirichlet problem but with a careful choice of function space.} Set $\mathcal{A} =\{u\in H_a^{(-\lambda)}: f=0\text{ on }\Sigma\}$, $\mathcal{B}= H_{a-2}^{(2-\lambda)}\times H_{a-1}^{(1-\lambda)}$.
By \cref{ThmA1}, \cref{regularity thm} and \eqref{c0 estimate},
there exists a 
solution $u\in\mathcal{A}$ to problem \eqref{MBVP} for all $(h,g)\in \mathcal{B}$ .
 Uniqueness of the solution to problem \eqref{MBVP} follows from the maximum principle immediately. Let  $Q:\mathcal{B}\rightarrow \mathcal A$ be an operator such that $Q(h,g)$ is the unique solution of problem \eqref{MBVP}.  One can readily see that the operator $Q$ is well-defined and bijective.
Notice that the inhomogeneous problem \eqref{inhomog eq} is equivalent to the following equation
\begin{align}\label{equi inhomog}
f-Q(0,(1+\gamma )f)=Q(h,g).
\end{align}

Thus $f\in\mathcal A$ is a solution to \eqref{equi inhomog} if and only if it solves \eqref{inhomog eq}.
Let $P$ be an operator from $H_{a-1}^{(1-\lambda)}$ to itself such that $Pu=Q(0,(1+\gamma )u)$ for all $u\in H_{a-1}^{(1-\lambda)}$.
Then the equation \eqref{equi inhomog} reads as 
\begin{align}\label{fredholm eq}
    f-Pf=v,
\end{align}
where $v=Q(h,g)$.

 To apply the classical Fredholm alternative(see e.g., \cite[Theorem 5.11]{GT01}) for \eqref{fredholm eq}, we need to verify that $P$ is a compact operator. {\color{black}To this end}, let $\{g_k\}$ be a bounded sequence in $H_{a-1}^{(1-\lambda)}$.
 By \cref{ThmA1} and \eqref{c0 estimate}, there exists $f_k\in \mathcal A$ such that $Pg_k=f_k$, and we have
 \begin{align}
   |f_k|_a^{(-\lambda)}\leq C\left(|g_k|_{a-1}^{(1-\lambda)}+|f_k|_0\right).   \end{align}
 Since $\left\{|f_k|_0\right\}_k$ is bounded, we have $\left\{f_k\right\}_k$ is also bounded in $H_{a}^{(-\lambda)}$.
 By virtue of the Ascoli-Arzela theorem, up to a subsequence, there exists ${\color{black}\tilde{f}}\in H_{a-1}^{(1-\lambda)}$, such that $f_{k_j}$ converges to ${\color{black}\tilde{f}}$ in $H_{a-1}^{(1-\lambda)}$. Thus $P$ is compact and the classical Fredholm alternative applies: \eqref{fredholm eq} has a unique solution $f\in H_{a-1}^{(1-\lambda)}$, provided that the homogenous equation $f-Pf=0$ has only the trivial solution $f=0$. Since $Q$ maps $\mathcal B$ onto $\mathcal A$, any solution  $f\in H_{a-1}^{(1-\lambda)}$ of \eqref{fredholm eq} also belongs to $\mathcal A$. This completes the proof.

\end{proof}

\begin{remark}
We remark that, in the case $\theta=\frac{\pi}{2}$, we can use boundary reflection to get better regularity, say global $W^{2, p}$ estimate for any $p$, see for example \cite[Proposition 3.5]{GX19}.
\end{remark}


\printbibliography
		
\end{document}